\def\@author#1{\g@addto@macro\elsauthors{\normalsize%
\def\baselinestretch{1}%
\upshape\authorsep#1\unskip\textsuperscript{%
\ifx\@fnmark\@empty\else\unskip\sep\@fnmark\let\sep=,\fi
\ifx\@corref\@empty\else\unskip\sep\@corref\let\sep=,\fi}%
\def\authorsep{\unskip,\space}%
\global\let\@fnmark\@empty
\global\let\@corref\@empty  
\global\let\sep\@empty}%
\@eadauthor={#1}
}
\newtheorem{theorem}{Theorem}[section]
\newtheorem{lemma}[theorem]{Lemma}
\newtheorem{remark}[theorem]{Remark}
\newfont{\eurorm}{eurm10 scaled 1000}
\newfont{\eurosm}{eurm10 scaled 700}
\newfont{\euroft}{eurm10 scaled 500}
\def\nbc#1#2{{\Big (}\!\!\mbox{$\begin{array}{c}\mbox{\footnotesize $#1$} \\[-0.9 mm]\mbox{\tiny $#2$}\end{array}$}\!\!{\Big )}}
\def\NBC#1#2{{\Big (}\!\!\mbox{$\begin{array}{c}\mbox{\footnotesize $#1$} \\[-0.11 mm]\mbox{\footnotesize $#2$}\end{array}$}\!\!{\Big )}}
\def\eu#1{\mbox{\eurorm #1}}
\def\euft#1{\mbox{\euroft #1}}
\def\eusm#1{\mbox{\eurosm #1}}
\newcommand{\myplus}{\ensuremath{+}}
\newcommand{\myast}{\ensuremath{\ast}}
\journal{arXiv.org}
\begin{document}

\begin{frontmatter}

\date{October 23, 2015.}

\title{\bf \large A METHOD OF PROVING A CLASS OF INEQUALITIES OF MIXED TRIGONOMETRIC POLYNOMIAL FUNCTIONS}

\author{\bf Branko Male\v sevi\' c\corref{cor}}
\ead{branko.malesevic@etf.rs}
\cortext[cor]{Corresponding author, {\it Telephone:} +381113218321, {\it Fax:} +381113248681 \\
Research is partially supported by Serbian Ministry of Education,
Science and Technological Development, Projects ON 174032 and III 44006.}
\author{\bf Milica Makragi\' c},
\ead{milica.makragic@etf.rs}

\address{Faculty of Electrical Engineering, University of  Belgrade,   \\
Bulevar kralja Aleksandra 73, 11000 Belgrade, Serbia}

\begin{abstract}
In this article we consider a method of proving a class of inequalities of the form (\ref{Ineq_1}).
The method is based on the precise approximations of the sine and cosine functions
by Maclaurin polynomials of given order. By using this method we present new proofs
of some inequalities of C.$-$P. Chen, W.$-$S. Cheung [J. Inequal. Appl.~2012:72~(2012)]
and Z.$-$J. Sun, L. Zhu [ISRN~Math.~Anal.~(2011)].
\end{abstract}

\begin{keyword}
Trigonometric Inequalities \sep Approximations of the Sine and Cosine

\MSC 26D05; 41A10
\end{keyword}

\end{frontmatter}

\section{Introduction} 

\smallskip
\noindent
\qquad
In this article we consider a method of proving trigonometric inequalities of the form
\begin{equation}
\label{Ineq_1}
f(x)=\sum\limits_{i=1}^{n}\alpha_{i}x^{p_{i}}\!\cos^{q_{i}}\!\!x\sin^{r_{i}}\!\!x>0
\end{equation}
for $x \!\in\! (\delta_{1},\delta_{2})$, $\delta_1 \!\leq\! 0 \!\leq\! \delta_2$ and $\delta_{1} \!<\! \delta_{2}$;
where $\alpha_{i} \!\in\! \mathbb{R} \!\setminus\! \lbrace0\rbrace$, \mbox{$p_{i}, q_{i}, r_{i}
\!\in\! \mathbb{N}_{0}$}~and~\mbox{$n \!\in\! \mathbb{N}$}. The function $f(x)$ is
a mixed trigonometric polynomial function, see~\cite{Dong_Yu_Yu_2013}.
These functions appear in~the theory of analytic inequalities \cite{Aharonov_Elias_2013}-\cite{Mortici_Debnath_Zhu_2015},
\cite{Guo_Luo_Qi_2013b}-\cite{Jiang_Qi_2012}, \cite{Klen_Visuri_Vuorinen_2010},
\cite{Malesevic_Banjac_Jovovic_2015}-\cite{Zhu_2006b}.

\noindent
\qquad
In the article \cite{Mortici_2011} a natural approach of proving some concrete \mbox{examples} of inequalities of the form (\ref{Ineq_1}) has been shown. This method is based on the direct comparison
of the sine and cosine functions with the corresponding Maclaurin polynomials. However,
the above-mentioned method is not appliable to the function $\cos^2\!x$ in the whole interval
$[0,\frac{\pi}{2}]$ and to the function $\sin^2\!x$ in the whole interval $[0,\pi]$.
Based on that fact, it is not advisable to make comparisons of $\cos^{q_{i}}\!\!x\!\cdot\!\sin^{r_{i}}\!\!x$
with the product of the corresponding Maclaurin approximations of the cosine and sine functions
raised to the powers $q_{i}$ and $r_{i}$ respectively. Therefore, one of the possibilities
is to make a transformation of $\cos^{q_{i}}\!\!x\!\cdot\!\sin^{r_{i}}\!\!x$ into
the sum of sines and cosines of multiple angles. In the continuation of the article,
we have explained a method of proving inequalities of the form (\ref{Ineq_1})
by transforming the function $f(x)$ into an equivalent form in which sines and cosines of multiple angles appear.

\noindent
\qquad
Let $\varphi : [a,b] \longrightarrow \mathbb{R}$
be a function which is differentiable on a segment $[a,b]$ and differentiable arbitrary
number of times on a right neighbourhood of the point $x=a$ and denote by $T^{\varphi, a}_{m}(x)$
the Taylor polynomial of the function $\varphi(x)$ in the point $x=a$ of the order $m$. If~there
is some $\eta\!>\!0$ such that holds:
$
T^{\varphi, a}_{m}(x) \geq \varphi(x),
$
for $x \!\in\! (a,a+\eta) \!\subset\! [a,b]$; then let us define
$\overline{T}^{\,\varphi,a}_{m}(x)=T^{\,\varphi, a}_{m}(x)$ and $\overline{T}^{\,\varphi,a}_{m}(x)$
present an upward approximation of the function $\varphi(x)$ on right neighbourhood $(a,a+\eta)$ of
the point $a$ of the oreder~$m$. Analogously, if there is some $\eta>0$ such that holds:
$
T^{\varphi, a}_{m}(x) \leq \varphi(x),
$
for $x \!\in\! (a,a+\eta) \!\subset\! [a,b]$; then let us define
$\underline{T}^{\varphi,a}_{\,m}(x)=T^{\,\varphi, a}_{m}(x)$ and $\underline{T}^{\varphi,a}_{\,m}(x)$
present a downward approximation of the function $\varphi(x)$ on right neighbourhood $(a,a+\eta)$ of
the point $a$ of the order $m$. Let us note that it is possible to analogously define upward and downward
approximations on some left neighbourhood of a point.  Further on, we observe the function $\varphi(x)$ as
a function from the set $\lbrace \sin x, \cos x \rbrace$.

\noindent
\qquad
Observing Maclaurin approximations of the sine and cosine functions, we notice that
$\overline{T}^{\,\sin,0}_{1}(x)$ is above and $\underline{T}^{\sin,0}_{\,3}(x)$ is below
the graph of the function $\sin x$ for $x\!>\!0$ and $\underline{T}^{\sin,0}_{\,1}(x)$
is below and $\overline{T}^{\,\sin,0}_{3}(x)$ is above the graph of the function $\sin x$
for $x\!<\!0$ as well as that $\overline{T}^{\,\cos,0}_{0}(x)$ is above and $\underline{T}^{\cos,0}_{\,2}(x)$
is below the graph of the function $\cos x$. The previous facts are stated precisely and generalized through
the following statements:

\begin{lemma}
{\rm \textbf{\!(i)}} For the polynomial $T_n(t)=\!\!\!\!\displaystyle\sum\limits_{i=0}^{(n-1)/2}
\!\!\dfrac{(-1)^it^{2i+1}}{(2i+1)!}$, where $n=4k+1$ $k\in \mathbb{N}_{0}$, it is valid:
\begin{equation}
\label{Ineq_2}
\Big{(}\forall t \in \big{[}0,\mbox{\small $\sqrt{(n+3)(n+4)}$}\,\big{]}\Big{)}\,\overline{T}_n(t)\geq \overline{T}_{n+4}(t)\geq \sin t,
\end{equation}

\vspace*{-2.5 mm}

\begin{equation}
\label{Ineq_3}
\Big{(}\forall t \in \big{[}\mbox{\small $-\sqrt{(n+3)(n+4)}$},0\big{]}\Big{)}\,\underline{T}_n(t)
\leq \underline{T}_{n+4}(t)\leq \sin t.
\end{equation}
For the value $t=0$ the inequalities in $(\ref{Ineq_2})$ and $(\ref{Ineq_3})$ turn into equalities.
For the values $t\!=\!\mbox{\small $\pm\sqrt{(n+3)(n+4)}$}$ the equalities $\overline{T}_n(t)\!=\!\overline{T}_{n+4}(t)$ and $\underline{T}_n(t)\!=\!\underline{T}_{n+4}(t)$ are true, respectively.

\smallskip
\noindent
{\rm \textbf{(ii)}}
For the polynomial $T_n(t)=\!\!\!\displaystyle\sum\limits_{i=0}^{(n-1)/2} \!\!\dfrac{(-1)^it^{2i+1}}{(2i+1)!}$,
where $n=4k+3$, $k\in \mathbb{N}_{0}$, it is valid:
\begin{equation}
\label{Ineq_4}
\Big{(}\forall t \in \big{[}0,\mbox{\small $\sqrt{(n+3)(n+4)}$}\,\big{]}\Big{)}\,\underline{T}_n(t)\leq \underline{T}_{n+4}(t)\leq \sin t,
\end{equation}

\vspace*{-2.5 mm}

\begin{equation}
\label{Ineq_5}
\Big{(}\forall t \in \big{[}\mbox{\small $-\sqrt{(n+3)(n+4)}$},0\big{]}\Big{)}\,\overline{T}_n(t)\geq \overline{T}_{n+4}(t)\geq \sin t.
\end{equation}

\break

\noindent
For the value $t=0$ the inequalities in $(\ref{Ineq_4})$ and $(\ref{Ineq_5})$ turn into equalities.
For the values $t=\mbox{\small $\pm\sqrt{(n+3)(n+4)}$}$ the equalities $\underline{T}_n(t)=\underline{T}_{n+4}(t)$ and $\overline{T}_n(t)=\overline{T}_{n+4}(t)$ are true, respectively.
\end{lemma}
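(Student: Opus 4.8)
The plan is to decouple each displayed chain into two independent claims: the \emph{ordering} of the two consecutive Maclaurin polynomials, and the \emph{one-sided approximation} of $\sin t$ by the higher one. Throughout I write $T_n(t)=S_m(t)$ with $m=(n-1)/2$, where $S_m(t)=\sum_{i=0}^{m}\frac{(-1)^i t^{2i+1}}{(2i+1)!}$. Since $S_m$ and $\sin$ are odd, every assertion on $[-\sqrt{(n+3)(n+4)},0]$ is obtained from the matching assertion on $[0,\sqrt{(n+3)(n+4)}]$ via $t\mapsto -t$; hence it suffices to argue on the positive interval and then flip signs, so that (\ref{Ineq_3}) and (\ref{Ineq_5}) follow from (\ref{Ineq_2}) and (\ref{Ineq_4}).

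For the ordering I would simply subtract. As $T_{n+4}$ and $T_n$ agree except in the two terms of degrees $n+2$ and $n+4$, a one-line computation gives
\begin{equation*}
T_{n+4}(t)-T_n(t)=(-1)^{(n+1)/2}\,\frac{t^{\,n+2}}{(n+2)!}\left(1-\frac{t^2}{(n+3)(n+4)}\right).
\end{equation*}
This single identity is the heart of the matter: the quadratic factor is nonnegative precisely on $[-\sqrt{(n+3)(n+4)},\sqrt{(n+3)(n+4)}]$ and vanishes at the two endpoints, which immediately yields the claimed endpoint equalities $T_n=T_{n+4}$. Reading off the sign of $(-1)^{(n+1)/2}$, which is negative when $n=4k+1$ and positive when $n=4k+3$, together with the sign of $t^{\,n+2}$, delivers the ordering half of (\ref{Ineq_2}) and (\ref{Ineq_4}).

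For the approximation I would prove the uniform statement $(-1)^m\big(S_m(t)-\sin t\big)\ge 0$ for all $t\ge 0$ by induction on $m$. Put $R_m=S_m-\sin$; termwise differentiation gives $S_m''=-S_{m-1}$, hence $R_m''=-R_{m-1}$, while $R_m(0)=R_m'(0)=0$. The base case $R_0(t)=t-\sin t\ge 0$ is classical. In the inductive step $(-1)^m R_m''=(-1)^{m-1}R_{m-1}\ge 0$, so $(-1)^m R_m$ is convex on $[0,\infty)$ and, vanishing together with its derivative at the origin, is therefore nonnegative; integrating $R_m''$ twice makes this precise. Taking $m=(n-1)/2$ (even for $n=4k+1$, odd for $n=4k+3$) and then $m+2$ for the degree-$(n+4)$ polynomial supplies $T_{n+4}\ge\sin t$ in (\ref{Ineq_2}) and $T_{n+4}\le\sin t$ in (\ref{Ineq_4}), which also certifies that these polynomials are genuine upward, resp.\ downward, approximations.

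The part I expect to demand the most care is not analytic but combinatorial: correctly aligning the two parity factors $(-1)^{(n+1)/2}$ and $(-1)^m$ with the residue classes $n\equiv 1,3\pmod 4$ and with the sign of $t$, so that each of the eight inequalities and four equalities is assembled with the right direction. Once the factored difference and the convexity induction are established, every case follows by inspection, so this sign bookkeeping is the only place where an error could realistically slip in.
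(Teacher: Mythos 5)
Your proof is correct, and half of it coincides with the paper's: the ordering of $T_n$ and $T_{n+4}$ is obtained from exactly the same identity $T_{n+4}(t)-T_n(t)=(-1)^{(n+1)/2}\frac{t^{n+2}}{(n+2)!}\bigl(1-\frac{t^2}{(n+3)(n+4)}\bigr)$ that the paper uses (with the same reading of signs and the same endpoint equalities), and the reduction of (\ref{Ineq_3}) and (\ref{Ineq_5}) to (\ref{Ineq_2}) and (\ref{Ineq_4}) by oddness of $\sin$ and $T_n$ is also the paper's step. Where you genuinely differ is the comparison with $\sin t$. The paper never leaves series arithmetic: it writes $\pm\bigl(T_n(t)-\sin t\bigr)$ as the tail of the alternating Maclaurin series grouped into consecutive pairs, each pair factored as $\frac{t^{N}}{N!}\bigl(1-\frac{t^2}{(N+1)(N+2)}\bigr)$ for suitable $N$, and all brackets are nonnegative precisely because $t^2\le(n+3)(n+4)$ is the binding constraint for the first pair; hence its one-sided bounds are established only on the stated interval, which is all the lemma needs, and they come out strict for $t>0$. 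You instead prove the global statement $(-1)^m\bigl(S_m(t)-\sin t\bigr)\ge 0$ on all of $[0,\infty)$ by induction on $m$, using $R_m''=-R_{m-1}$, $R_m(0)=R_m'(0)=0$, the base case $t\ge\sin t$, and convexity. Your route yields a stronger conclusion --- the one-sided approximation of $\sin$ is unconditional in $t$, the interval mattering only for the ordering half --- and packages the argument as a clean induction; the paper's route is more self-contained bookkeeping of series terms and delivers strict inequalities on the interval for free, but its estimate is tied to that interval. Your parity checks are right: $(-1)^{(n+1)/2}$ equals $-1$ for $n=4k+1$ and $+1$ for $n=4k+3$, while $m=(n-1)/2$ is even, respectively odd, in the two cases, so every chain points in the stated direction.
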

\begin{proof}
{\rm \textbf{(i)}}
Let us suppose that $ 0<t\leq\sqrt{(n+3)(n+4)}$, then it is correct:
$$
\begin{array}{l}
\overline{T}_n(t)=\displaystyle\sum\limits_{i=0}^{2k}
\dfrac{(-1)^it^{2i+1}}{(2i+1)!}>
\displaystyle\sum\limits_{i=0}^{\infty}\dfrac{(-1)^it^{2i+1}}{(2i+1)!}=\sin t
                                                                                      \\[3.5 ex]
\Longleftrightarrow\;
\displaystyle\sum\limits_{j=1}^\infty
\dfrac{t^{4(k+j)-1}}{{\big(}4(k+j)-1{\big)}!}
\underbrace{\left(1-\dfrac{t^2}{{\big(}4(k+j){\big)}{\big(}4(k+j)+1{\big)}}\right)}_{(\geq0)}
>0;
\end{array}
$$
then,

\smallskip
$$
\overline{T}_{n+4}(t)=\overline{T}_n(t)-\dfrac{t^{n+2}}{(n+2)!}
\underbrace{\left(1-\dfrac{t^2}{(n+3)(n+4)}\right)}_{(\geq0)}\leq \overline{T}_n(t)
$$
and
$$
\begin{array}{l}
\overline{T}_{n+4}(t)=\displaystyle\sum\limits_{i=0}^{2k+2}
\dfrac{(-1)^it^{2i+1}}{(2i+1)!}>\displaystyle\sum\limits_{i=0}^{\infty}
\dfrac{(-1)^it^{2i+1}}{(2i+1)!}=\sin t
                                                                                      \\[3.5 ex]
\Longleftrightarrow\;
\displaystyle\sum\limits_{j=1}^\infty
\dfrac{t^{4(k+j)+3}}{{\big(}4(k+j)+3{\big)}!}
\underbrace{\left(1-\dfrac{t^2}{{\big(}4(k+j)+4{\big)}{\big(}4(k+j)+5{\big)}}\right)}_{(>0)}
>0.
\end{array}
$$
The corresponding equalities at the endpoints of the segment $\big{[}0,
\mbox{\small $\sqrt{(n+3)(n+4)}$}\,\big{]}$ are easily checked.
Overall, (\ref{Ineq_2}) has been proved. For
$t \in \big{[}\mbox{\small $-\sqrt{(n+3)(n+4)}$}, 0\big{)}$,
(\ref{Ineq_3}) is valid on the basis of the odd property of the function $\sin x$.
Overall, (\ref{Ineq_3}) has been proved.

\smallskip
\noindent
{\rm \textbf{(ii)}}
Let us suppose that $ 0<t\leq\sqrt{(n+3)(n+4)}$, then it is correct:
$$
\begin{array}{l}
\underline{T}_n(t)=\displaystyle\sum\limits_{i=0}^{2k+1}
\dfrac{(-1)^it^{2i+1}}{(2i+1)!}<
\displaystyle\sum\limits_{i=0}^{\infty}\dfrac{(-1)^it^{2i+1}}{(2i+1)!}=\sin t
                                                                                           \\[3.5 ex]
\Longleftrightarrow\;
\displaystyle\sum\limits_{j=1}^\infty
-\dfrac{t^{4(k+j)+1}}{{\big(}4(k+j)+1{\big)}!}
\underbrace{\left(1-\dfrac{t^2}{{\big(}4(k+j)+2{\big)}{\big(}4(k+j)+3{\big)}}\right)}_{(\geq0)}
<0;
\end{array}
$$
then,

\smallskip
$$
\underline{T}_{n+4}(t)=\underline{T}_n(t)+\dfrac{t^{n+2}}{(n+2)!}
\underbrace{\left(1-\dfrac{t^2}{(n+3)(n+4)}\right)}_{(\geq0)}\geq \underline{T}_n(t)
$$
and
$$
\begin{array}{l}
\underline{T}_{n+4}(t)=\displaystyle\sum\limits_{i=0}^{2k+3}
\dfrac{(-1)^it^{2i+1}}{(2i+1)!}<
\displaystyle\sum\limits_{i=0}^{\infty}
\dfrac{(-1)^it^{2i+1}}{(2i+1)!}=\sin t
                                                                                             \\[3.5 ex]
\Longleftrightarrow\;
\displaystyle\sum\limits_{j=1}^\infty
\dfrac{-t^{4(k+j)+5}}{{\big(}4(k+j)+5{\big)}!}
\underbrace{\left(1-\dfrac{t^2}{{\big(}4(k+j)+6{\big)}{\big(}4(k+j)+7{\big)}}\right)}_{(>0)}
<0.
\end{array}
$$
The corresponding equalities at the endpoints of the segment $\big{[}0,
\mbox{\small $\sqrt{(n+3)(n+4)}$}\,\big{]}$ are easily checked.
Overall, (\ref{Ineq_4}) has been proved. For $t \in \big{[}\mbox{\small $-\sqrt{(n+3)(n+4)}$},
0\big{)}$, (\ref{Ineq_5}) is valid on the basis of the odd property of the function $\sin x$.
Overall, (\ref{Ineq_5}) has been proved.
\end{proof}
\begin{lemma}
{\rm \textbf{(i)}}
For the polynomial $T_n(t)=\displaystyle\sum\limits_{i=0}^{n/2}\dfrac{(-1)^it^{2i}}{(2i)!}$,
where $n=4k$, $k\in \mathbb{N}_{0}$, it is valid:
\begin{equation}
\label{Ineq_6}
\Big{(}\forall t \in \big{[}\mbox{\small $-\sqrt{(n+3)(n+4)},\sqrt{(n+3)(n+4)}$}\,\big{]}\Big{)} \,\,\,\overline{T}_n(t)\geq \overline{T}_{n+4}(t)\geq \cos t.
\end{equation}

\smallskip
\noindent
For the value $t=0$ the inequalities in $(\ref{Ineq_6})$ turn into equalities.
For the values $t=\mbox{\small $\pm\sqrt{(n+3)(n+4)}$}$ the equality $\overline{T}_n(t)=\overline{T}_{n+4}(t)$ is true.

\smallskip
\noindent
{\rm \textbf{(ii)}}
For the polynomial $T_n(t)=\displaystyle\sum\limits_{i=0}^{n/2}\dfrac{(-1)^it^{2i}}{(2i)!}$,
where $n=4k+2$, $k\in \mathbb{N}_{0}$, it is valid:
\begin{equation}
\label{Ineq_7}
\Big{(}\forall t \in \big{[}\mbox{\small $-\sqrt{(n+3)(n+4)},\sqrt{(n+3)(n+4)}$}\,\big{]}\Big{)} \,\,\, \underline{T}_n(t)\leq \underline{T}_{n+4}(t)\leq \cos t.
\end{equation}

\noindent
For the value $t=0$ the inequalities in $(\ref{Ineq_7})$ turn into equalities.
For the values $t=\mbox{\small $\pm\sqrt{(n+3)(n+4)}$}$ the equality $\underline{T}_n(t)=\underline{T}_{n+4}(t)$ is true.
\end{lemma}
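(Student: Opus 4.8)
The plan is to follow exactly the template used for the sine lemma, the only structural change being that here $\cos t$ and every polynomial $T_n(t)$ are \emph{even} functions (they contain only even powers of $t$). Consequently it suffices to prove each chain of inequalities on the right half $[0,\sqrt{(n+3)(n+4)}\,]$ and then extend it to the whole symmetric interval by the substitution $t\mapsto -t$; the point $t=0$, where all three expressions equal $1$, yields the asserted equalities at once.

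For part \textbf{(i)} put $n=4k$, so that $\overline{T}_n(t)=T_n(t)=\sum_{i=0}^{2k}\frac{(-1)^it^{2i}}{(2i)!}$. First I would establish $\overline{T}_n(t)\geq\cos t$ on $(0,\sqrt{(n+3)(n+4)}\,]$ by subtracting the Maclaurin series,
\[
\overline{T}_n(t)-\cos t=-\sum_{i=2k+1}^{\infty}\frac{(-1)^it^{2i}}{(2i)!},
\]
and pairing the tail two terms at a time. The $j$-th pair (indices $2k+2j-1$ and $2k+2j$) collapses to $\frac{t^{4(k+j)-2}}{(4(k+j)-2)!}\bigl(1-\frac{t^{2}}{(4(k+j)-1)\,4(k+j)}\bigr)$, and the decisive observation is that the \emph{smallest} denominator, occurring at $j=1$, equals exactly $(4k+3)(4k+4)=(n+3)(n+4)$. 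Hence on the stated interval every bracketed factor is nonnegative (strictly positive for $j\geq 2$), so the whole sum is $\geq 0$.

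Next comes the nesting $\overline{T}_{n+4}(t)\leq\overline{T}_n(t)$. Since $\overline{T}_{n+4}$ differs from $\overline{T}_n$ only by the two terms $i=2k+1,2k+2$, a one-line computation gives
\[
\overline{T}_{n+4}(t)=\overline{T}_n(t)-\frac{t^{\,n+2}}{(n+2)!}\Bigl(1-\frac{t^{2}}{(n+3)(n+4)}\Bigr),
\]
whose correction term is nonnegative on the interval; this yields the inequality and, at $t=\pm\sqrt{(n+3)(n+4)}$, the equality $\overline{T}_n=\overline{T}_{n+4}$. The remaining bound $\overline{T}_{n+4}(t)\geq\cos t$ follows from the identical pairing applied to the tail starting at $i=2k+3$, where the smallest denominator is $(4k+7)(4k+8)>(n+3)(n+4)$, so all factors are strictly positive and the bound is in fact strict on $(0,\sqrt{(n+3)(n+4)}\,]$.

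Part \textbf{(ii)}, with $n=4k+2$, is the exact dual: the top term of $T_n$ now carries a minus sign, so $\underline{T}_n=T_n$ and all three inequalities reverse. The same two-term pairing of the tail $-\sum_{i=2k+2}^{\infty}\frac{(-1)^it^{2i}}{(2i)!}$ produces factors $1-\frac{t^{2}}{(4(k+j)+1)(4(k+j)+2)}$, whose smallest denominator at $j=1$ is again $(4k+5)(4k+6)=(n+3)(n+4)$, while the nesting identity becomes $\underline{T}_{n+4}(t)=\underline{T}_n(t)+\frac{t^{\,n+2}}{(n+2)!}\bigl(1-\frac{t^{2}}{(n+3)(n+4)}\bigr)$ with the same nonnegative correction. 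I do not expect any serious obstacle here: both cosine series converge everywhere, so term regrouping is unproblematic, and the only point requiring genuine care is the index bookkeeping that forces the smallest denominator of each grouped tail to coincide with $(n+3)(n+4)$ — this is precisely what makes the endpoint $\sqrt{(n+3)(n+4)}$ sharp and delivers the boundary equalities.
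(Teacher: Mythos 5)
Your proposal is correct and follows essentially the same route as the paper's own proof: the identical two-term pairing of the Maclaurin tail (with the same factors $1-\frac{t^2}{(4(k+j)-1)\,4(k+j)}$, resp. $1-\frac{t^2}{(4(k+j)+1)(4(k+j)+2)}$), the same nesting identity $\overline{T}_{n+4}(t)=\overline{T}_n(t)-\frac{t^{n+2}}{(n+2)!}\bigl(1-\frac{t^2}{(n+3)(n+4)}\bigr)$ and its dual, and the same reduction to $t\geq 0$ via evenness of $\cos t$. All index bookkeeping checks out, including the identification of the smallest denominator with $(n+3)(n+4)$ in both parity cases, so there is nothing to correct.
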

\begin{proof}
{\rm \textbf{(i)}}
Let us suppose that $ 0<t\leq\sqrt{(n+3)(n+4)}$, then it is correct:
$$
\begin{array}{l}
\overline{T}_n(t)=\displaystyle\sum\limits_{i=0}^{2k}\dfrac{(-1)^it^{2i}}{(2i)!}>
\displaystyle\sum\limits_{i=0}^{\infty}\dfrac{(-1)^it^{2i}}{(2i)!}=\cos t
                                                                                            \\[3.5 ex]
\Longleftrightarrow\;
\displaystyle\sum\limits_{j=1}^\infty
\dfrac{t^{4(k+j)-2}}{{\big(}4(k+j)-2{\big)}!}
\underbrace{\left(1-\dfrac{t^2}{{\big(}4(k+j)-1{\big)}{\big(}4(k+j){\big)}}\right)}_{(\geq0)}
>0;
\end{array}
$$
then,
$$
\overline{T}_{n+4}(t)=\overline{T}_n(t)-\dfrac{t^{n+2}}{(n+2)!}
\underbrace{\left(1-\dfrac{t^2}{(n+3)(n+4)}\right)}_{(\geq0)}\leq \overline{T}_n(t)
$$
and
$$
\begin{array}{l}
\overline{T}_{n+4}(t)=\displaystyle\sum\limits_{i=0}^{2k+2}\dfrac{(-1)^it^{2i}}{(2i)!}>
\displaystyle\sum\limits_{i=0}^{\infty}\dfrac{(-1)^it^{2i}}{(2i)!}=\cos t
                                                                                              \\[3.5 ex]
\Longleftrightarrow\;
\displaystyle\sum\limits_{j=1}^\infty
\dfrac{t^{4(k+j)+2}}{{\big(}4(k+j)+2{\big)}!}
\underbrace{\left(1-\dfrac{t^2}{{\big(}4(k+j)+3{\big)}{\big(}4(k+j)+4{\big)}}\right)}_{(>0)}
>0.
\end{array}
$$
The corresponding equalities at the endpoints of the segment $\big{[}0,
\mbox{\small $\sqrt{(n+3)(n+4)}$}\,\big{]}$ are easily checked.
Therefore, the inequalities in (\ref{Ineq_6}) has been proved for
$t \in\big{[}0,\mbox{\small $\sqrt{(n+3)(n+4)}$}\,\big{]}$.
For $t \in \big{[}\mbox{\small $-\sqrt{(n+3)(n+4)}$},0\big{)}$
the inequalities in (\ref{Ineq_6}) are valid on the basis of the
even property of the function $\cos x$.
Overall, (\ref{Ineq_6}) has been proved.

\smallskip
\noindent
{\rm \textbf{(ii)}}
Let us suppose that $0<t\leq\sqrt{(n+3)(n+4)}$, then it is correct:
$$
\begin{array}{l}
\underline{T}_n(t)=\displaystyle\sum\limits_{i=0}^{2k+1}\dfrac{(-1)^it^{2i}}{(2i)!}<
\displaystyle\sum\limits_{i=0}^{\infty}\dfrac{(-1)^it^{2i}}{(2i)!}=\cos t
                                                                                            \\[3.5 ex]
\Longleftrightarrow\;
\displaystyle\sum\limits_{j=1}^\infty
-\dfrac{t^{4(k+j)}}{{\big(}4(k+j){\big)}!}
\underbrace{\left(1-\dfrac{t^2}{{\big(}4(k+j)+1{\big)}{\big(}4(k+j)+2{\big)}}\right)}_{(\geq0)}
<0;
\end{array}
$$
then,
$$
\underline{T}_{n+4}(t)=\underline{T}_n(t)+\dfrac{t^{n+2}}{(n+2)!}
\underbrace{\left(1-\dfrac{t^2}{(n+3)(n+4)}\right)}_{(\geq0)}\geq \underline{T}_n(t)
$$
and
$$
\begin{array}{l}
\underline{T}_{n+4}(t)=\displaystyle\sum\limits_{i=0}^{2k+3}\dfrac{(-1)^it^{2i}}{(2i)!}<
\displaystyle\sum\limits_{i=0}^{\infty}\dfrac{(-1)^it^{2i}}{(2i)!}=\cos t
                                                                                             \\[3.5 ex]
\Longleftrightarrow\;
\displaystyle\sum\limits_{j=1}^\infty
\dfrac{-t^{4(k+j)+4}}{{\big(}4(k+j)+4{\big)}!}
\underbrace{\left(1-\dfrac{t^2}{{\big(}4(k+j)+5{\big)}{\big(}4(k+j)+6{\big)}}\right)}_{(>0)}
<0.
\end{array}
$$
The corresponding equalities at the endpoints of the segment $\big{[}0,
\mbox{\small $\sqrt{(n+3)(n+4)}$}\,\big{]}$ are easily checked.
Therefore, the inequalities in (\ref{Ineq_7}) has been proved for
$t \in\big{[}0,\mbox{\small $\sqrt{(n+3)(n+4)}$}\,\big{]}$.
For $t \in \big{[}\mbox{\small $-\sqrt{(n+3)(n+4)}$},0\big{)}$
the inequalities in (\ref{Ineq_7}) are valid on the basis of the
even property of the function $\cos x$.
Overall, (\ref{Ineq_7}) has been proved.
\end{proof}

\smallskip
\noindent
Let us consider a complex number $z=e^{\eusm{i}x}$ {\big (}$x\!\in\!\mathbb{R},\,\eu{i}\!=\!\sqrt{-1}$ -- imaginary~unit{\big )}.
Then it holds:
\begin{equation}
\label{Ineq_8}
\cos x=\displaystyle\frac{1}{2}{\Big (}z+\frac{1}{z}{\Big )}
\quad\mbox{and}\quad
\sin x=\displaystyle\frac{1}{2\eu{i}}{\Big (}z-\frac{1}{z}{\Big )}.
\end{equation}
Let us introduce the following functions:
\begin{equation}
\label{Ineq_9}
R_{k}(z)=z^{k}+\displaystyle\frac{1}{z^{k}}
\quad\mbox{and}\quad
Q_{k}(z)=z^{k}-\displaystyle\frac{1}{z^{k}},
\end{equation}
for $k\!=\!1,2,\ldots\;$. Then it is:
\begin{equation}
\label{Ineq_10}
R_{k}(z)=2\cos (kx)
\quad\mbox{and}\quad
Q_{k}(z)=2\eu{i}\sin (kx),
\end{equation}
for $z\!=\!e^{\eusm{i}x}\!$ and $k\!=\!1,2,\ldots\;$. Hence, we may come to the conclusion that it holds:
\begin{equation}
\label{Ineq_11}
R_{n}(z)\cdot R_{m}(z)=R_{n+m}(z)+R_{\mid n-m \mid}(z)
\end{equation}
and
\begin{equation}
\label{Ineq_12}
R_{n}(z)\cdot Q_{m}(z)=Q_{n+m}(z)+\nu \cdot Q_{\mid n-m \mid}(z),
\end{equation}
where $\nu=\mbox{\rm sgn}{\big (}m-n{\big )}$. Specifically, $R_{0}(z)=2$ and $Q_{0}(z)=0$.

\smallskip
\noindent
In the following auxiliary proposition we show that $\sin^{n}\!x$ can be presented as a sum of sines of multiple angels
or sum of cosines of multiple angels depending on the parity of degree $n$.
\begin{lemma}
For $n \in \mathbb{N}$ the following formulas are valid:

\smallskip
\noindent
{\rm \textbf{(i)}}
if $n$ is odd
\begin{equation}
\label{Ineq_13}
\sin^{n}\!x=\frac{2}{2^{n}}\displaystyle\sum\limits_{k=0}^{\frac{n-1}{2}}(-1)^{\frac{n-1}{2}+k}
\NBC{\!n\!}{\!k\!} \sin{\big (}(n-2k)x{\big )},
\end{equation}

\smallskip
\noindent
{\rm \textbf{(ii)}}
if $n$ is even
\begin{equation}
\label{Ineq_14}
\sin^{n}\!x=\frac{1}{2^{n}} \nbc{\!\!n\!\!}{\!\!\frac{n}{2}\!\!}+
\frac{2}{2^{n}} \displaystyle\sum\limits_{k=0}^{\frac{n}{2}-1}(-1)^{\frac{n}{2}+k}
\NBC{\!n\!}{\!k\!} \cos {\big (}(n-2k)x{\big )}.
\end{equation}
\end{lemma}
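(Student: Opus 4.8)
The plan is to start from the complex representation $\sin x = \frac{1}{2\eu{i}}\big(z - z^{-1}\big)$ recorded in (\ref{Ineq_8}) with $z = e^{\eusm{i}x}$, raise it to the $n$-th power, and expand the right-hand side by the binomial theorem:
\begin{equation*}
\sin^{n}\!x = \frac{1}{(2\eu{i})^{n}}\big(z - z^{-1}\big)^{n}
= \frac{1}{(2\eu{i})^{n}}\sum_{k=0}^{n}\binom{n}{k}(-1)^{k}z^{\,n-2k}.
\end{equation*}
Everything then reduces to regrouping the monomials $z^{\,n-2k}$ into the symmetric combinations $R_{j}(z)$ and $Q_{j}(z)$ of (\ref{Ineq_9})--(\ref{Ineq_10}), after which the two parities of $n$ are read off directly.

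The conceptual heart is to pair the summand indexed by $k$ with the one indexed by $n-k$, whose exponent is $-(n-2k)$. Since $\binom{n}{n-k} = \binom{n}{k}$ and $(-1)^{n-k} = (-1)^{n}(-1)^{k}$, the two add up to $\binom{n}{k}(-1)^{k}\big[z^{\,n-2k} + (-1)^{n}z^{-(n-2k)}\big]$. For $n$ odd the bracket is $z^{\,n-2k} - z^{-(n-2k)} = Q_{n-2k}(z) = 2\eu{i}\sin\big((n-2k)x\big)$, whereas for $n$ even it is $z^{\,n-2k} + z^{-(n-2k)} = R_{n-2k}(z) = 2\cos\big((n-2k)x\big)$. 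I expect this pairing, together with the parity split it forces, to be the only genuinely structural step.

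I would then treat the two cases separately. For $n = 2m+1$ odd the $n+1$ summands form an even count, so the indices $k$ and $n-k$ pair off exactly for $k = 0, \ldots, m$ with no central term, giving $\big(z - z^{-1}\big)^{n} = 2\eu{i}\sum_{k=0}^{m}\binom{n}{k}(-1)^{k}\sin\big((n-2k)x\big)$. For $n = 2m$ even the exponent $n-2k$ vanishes at the central index $k = m = n/2$, leaving the isolated constant $\binom{n}{n/2}(-1)^{n/2}$, while the remaining indices $k = 0, \ldots, m-1$ combine into cosines as above.

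The last step is to simplify the prefactor $1/(2\eu{i})^{n}$, and this is where I expect the only real bookkeeping to lie. Using $\eu{i}^{\,2m+1} = (-1)^{m}\eu{i}$ in the odd case, the lone factor $\eu{i}$ cancels the $\eu{i}$ coming from each $Q_{n-2k}$, and rewriting $(-1)^{-m} = (-1)^{m}$ turns the residual sign into $(-1)^{(n-1)/2+k}$, which yields (\ref{Ineq_13}). Using $\eu{i}^{\,2m} = (-1)^{n/2}$ in the even case, the product $(-1)^{n/2}(-1)^{n/2} = (-1)^{n} = 1$ clears the sign on the constant term, while $(-1)^{n/2}(-1)^{k} = (-1)^{n/2+k}$ on the cosine terms, giving (\ref{Ineq_14}). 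The main obstacle is thus purely the careful tracking of the powers of $\eu{i}$ and the $(-1)$ factors; no analytic input beyond the algebraic identities (\ref{Ineq_8})--(\ref{Ineq_12}) is needed.
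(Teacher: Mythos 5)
Your proof is correct, and I verified the sign bookkeeping in both parities: the pairing of the indices $k$ and $n-k$ in the binomial expansion of $(z-z^{-1})^{n}$, the isolated central term $\binom{n}{n/2}(-1)^{n/2}$ for even $n$, and the reductions $\eu{i}^{\,2m+1}=(-1)^{m}\eu{i}$, $\eu{i}^{\,2m}=(-1)^{m}$ all combine to give exactly (\ref{Ineq_13}) and (\ref{Ineq_14}). Note that the paper itself offers no written proof of this lemma: it merely cites Ex.~17, 18, Chapter IX of \cite{Durell_Robson_1930} and the method of \cite{Krechmar_1978}. The classical method in those references is precisely the de Moivre/binomial expansion you carried out, and your argument also uses the machinery $R_{k}(z)$, $Q_{k}(z)$ of (\ref{Ineq_8})--(\ref{Ineq_10}) that the paper sets up immediately before the lemma and deploys again in Theorem 1.5. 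So you have not taken a different route; rather, you have supplied, in self-contained form, the proof the paper delegates to the literature, which is arguably more useful than the citation alone.
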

\begin{proof}
See Ex. 17, 18, Chapter IX \cite{Durell_Robson_1930} and the method of proving from \cite{Krechmar_1978}.
\end{proof}

\noindent
For the function $\cos^{n}\!x$ the following proposition analogously holds:
\begin{lemma}
For $n \in \mathbb{N}$ the following formulas are valid:

\smallskip
\noindent
{\rm \textbf{(i)}}
if $n$ is odd
\begin{equation}
\label{Ineq_15}
\cos^{n}\!x=\frac{2}{2^{n}}\displaystyle\sum\limits_{k=0}^{\frac{n-1}{2}} \NBC{\!n\!}{\!k\!}
\cos{\big(}(n-2k)x{\big)},
\end{equation}

\smallskip
\noindent
{\rm \textbf{(ii)}}
if $n$ is even
\begin{equation}
\label{Ineq_16}
\cos^{n}\!x=\frac{1}{2^{n}} \nbc{\!\!n\!\!}{\!\!\frac{n}{2}\!\!} +\frac{2}{2^{n}}
\displaystyle\sum\limits_{k=0}^{\frac{n}{2}-1} \NBC{\!n\!}{\!k\!}
\cos {\big(}(n-2k)x{\big)}.
\end{equation}
\end{lemma}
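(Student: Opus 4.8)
The plan is to follow the same complex-variable route that underlies the preceding lemma for the sine powers, taking advantage of the fact that the cosine admits a representation free of sign changes and factors of $\eu{i}$. Starting from (\ref{Ineq_8}), namely $\cos x=\frac{1}{2}\big(z+\frac{1}{z}\big)$ with $z=e^{\eusm{i}x}$, I would raise both sides to the $n$-th power and expand by the binomial theorem,
\[
\cos^{n}\!x=\frac{1}{2^{n}}\Big(z+\frac{1}{z}\Big)^{n}=\frac{1}{2^{n}}\sum_{k=0}^{n}\NBC{\!n\!}{\!k\!}\,z^{\,n-2k},
\]
since the $k$-th summand contributes $z^{\,n-k}\cdot z^{-k}=z^{\,n-2k}$. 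Everything after this display is a reorganization of one finite sum into cosines of multiple angles.

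The key step is to pair the term indexed by $k$ with the term indexed by $n-k$. By the symmetry $\NBC{\!n\!}{\!k\!}=\NBC{\!n\!}{\!n-k\!}$ these two coefficients are equal, while their exponents $n-2k$ and $n-2(n-k)=-(n-2k)$ are opposite; hence, invoking the functions introduced in (\ref{Ineq_9}) together with the identity (\ref{Ineq_10}),
\[
z^{\,n-2k}+z^{-(n-2k)}=R_{n-2k}(z)=2\cos\big((n-2k)x\big).
\]
Collapsing the sum onto its lower range of indices therefore converts each such pair into a cosine of a multiple angle carrying the weight $\frac{2}{2^{n}}\NBC{\!n\!}{\!k\!}$, which is exactly the shape of the coefficients appearing in (\ref{Ineq_15}) and (\ref{Ineq_16}).

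It then remains only to separate the two parities, which is where the sole piece of genuine care lies. When $n$ is odd, no index is self-paired: the set $\{0,\dots,n\}$ splits cleanly into the pairs $\{k,\,n-k\}$ for $k=0,\dots,\frac{n-1}{2}$, and summing the resulting cosines yields (\ref{Ineq_15}). When $n$ is even, the index $k=\frac{n}{2}$ is its own partner and produces the exponent $n-2\cdot\frac{n}{2}=0$, i.e.\ the constant contribution $\frac{1}{2^{n}}\nbc{\!\!n\!\!}{\!\!\frac{n}{2}\!\!}$; the remaining indices pair off over $k=0,\dots,\frac{n}{2}-1$ and supply the cosine sum, giving (\ref{Ineq_16}). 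The main obstacle is thus purely a bookkeeping one: isolating the central term for even $n$ so that it is counted exactly once and not erroneously absorbed into a pair. (An alternative would be an induction on $n$ using the product-to-sum rule (\ref{Ineq_11}), but the direct binomial expansion above is cleaner and matches the treatment of $\sin^{n}\!x$.)
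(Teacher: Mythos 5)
Your proof is correct. Note that the paper itself supplies no written argument for this lemma: its ``proof'' is only a citation to Ex.~15, 16, Chapter~IX of Durell--Robson and to the method of Krechmar. What you have done is carry out exactly the method those citations point to, using precisely the machinery the paper sets up in (\ref{Ineq_8})--(\ref{Ineq_12}) and later deploys in the proof of Theorem~1.5: expand $\cos^{n}\!x=\frac{1}{2^{n}}\bigl(z+z^{-1}\bigr)^{n}$ by the binomial theorem, pair the terms $k$ and $n-k$ via $\binom{n}{k}=\binom{n}{n-k}$, and recognize $z^{\,n-2k}+z^{-(n-2k)}=R_{n-2k}(z)=2\cos\bigl((n-2k)x\bigr)$, which is legitimate since the paired indices satisfy $n-2k\geq 1$ so that (\ref{Ineq_10}) applies. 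Your treatment of the only delicate point --- the self-paired central index $k=\frac{n}{2}$ when $n$ is even, contributing the constant $\frac{1}{2^{n}}\binom{n}{n/2}$ exactly once --- is also correct. So your argument is complete and self-contained where the paper defers to references, and it is the natural companion to the paper's own proof of Theorem~1.5, which manipulates the same $R_{k}$, $Q_{k}$ expressions.
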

\begin{proof}
See Ex. 15, 16, Chapter IX \cite{Durell_Robson_1930} and the method of proving from \cite{Krechmar_1978}.
\end{proof}

\smallskip
\noindent
Based on the previous two Lemmas we give a proof of the following statement:

\begin{theorem}
For $n, m \in \mathbb{N}$ we have the following cases:

\smallskip
\noindent
{\rm \textbf{(i)}} if both $n$ and $m$ are odd
\begin{equation}
\label{Ineq_17}
\begin{array}{rcl}
\cos^{n}\!x\! \cdot \sin^{m}\!x\!
\!\!&\!\!=\!\!&\!\!
\frac{1}{2^{n+m-1}}\!\!\!\!\!\displaystyle\sum\limits_{k=0}^{\frac{n+m}{2}-\mbox{\tiny $1$}}
\!\!\!(\!-1\!)^{\frac{m-1}{2}+k}\!
{\Bigg (}
\!\displaystyle\sum\limits_{r=0}^{k}(\!-1\!)^{r} \NBC{\!n\!}{\!r\!} \NBC{\!m\!}{\!k-r\!}
\sin \!{\big (}\!(\!n\!+\!m\!-\!2k\!)x\!{\big )}\!
{\Bigg )},
\end{array}
\end{equation}

\smallskip
\noindent
{\rm \textbf{(ii)}} if $n$ is even and $m$ is odd
\begin{equation}
\label{Ineq_18}
\begin{array}{rcl}
\cos^{n}\!x\! \cdot \sin^{m}\!x\!
\!\!&\!\!=\!\!&\!\!
\frac{1}{2^{n+m-1}}\!\!\!\!\!\displaystyle\sum\limits_{k=0}^{\frac{n+m-1}{2}}
\!\!\!(\!-1\!)^{\frac{m-1}{2}+k}\!
{\Bigg (}
\!\displaystyle\sum\limits_{r=0}^{k}(\!-1\!)^{r} \NBC{\!n\!}{\!r\!} \NBC{\!m\!}{\!k-r\!}
\sin \!{\big (}\!(\!n\!+\!m\!-\!2k\!)x\!{\big )}\!
{\Bigg )},
\end{array}
\end{equation}

\smallskip
\noindent
{\rm \textbf{(iii)}} if $n$ is odd and $m$ is even
\begin{equation}
\label{Ineq_19}
\begin{array}{rcl}
\cos^{n}\!x\! \cdot \sin^{m}\!x\!
\!\!&\!\!=\!\!&\!\!
\frac{1}{2^{n+m-1}}\!\!\!\!\!\displaystyle\sum\limits_{k=0}^{\frac{n+m-1}{2}}
\!\!\!(\!-1\!)^{\frac{m}{2}+k}\!
{\Bigg (}
\!\displaystyle\sum\limits_{r=0}^{k}(\!-1\!)^{r} \NBC{\!n\!}{\!r\!} \NBC{\!m\!}{\!k-r\!}
\cos \!{\big (}\!(\!n\!+\!m\!-\!2k\!)x\!{\big )}\!
{\Bigg )},
\end{array}
\end{equation}

\smallskip
\noindent
{\rm \textbf{(iv)}} if both $n$ and $m$ are even
\begin{equation}
\label{Ineq_20}
\begin{array}{rcl}
\cos^{n}\!x\! \cdot \sin^{m}\!x\!
\!\!&\!\!=\!\!&\!\!
\frac{1}{2^{n+m-1}}
{\Bigg(}\!\!
\displaystyle\sum\limits_{k=0}^{\frac{n+m}{2}-\mbox{\tiny $1$}}\!\!\!(\!-1\!)^{\frac{m}{2}+k}
\!\displaystyle\sum\limits_{r=0}^{k}(\!-1\!)^{r} \NBC{\!n\!}{\!r\!} \NBC{\!m\!}{\!k-r\!}
\cos \!{\big (}\!(\!n\!+\!m\!-\!2k\!)x\!{\big )}\!
                                                                                  \\[1.5ex]
\!\!&\!\!+\!\!&\!\!
\frac{1}{2}(-1)^{\frac{2m+n}{2}}\displaystyle\sum\limits_{r=0}^{\frac{n+m}{2}}(-1)^{r}
\NBC{\!n\!}{\!r\!}\nbc{\!m\!}{\!\!\frac{n+m}{2}-r\!\!}\!\!
{\Bigg)}.
\end{array}
\end{equation}
\end{theorem}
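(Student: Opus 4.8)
The plan is to reduce both factors to the elementary objects $R_{k}(z)=2\cos(kx)$ and $Q_{k}(z)=2\mathrm{i}\sin(kx)$ of (\ref{Ineq_9})--(\ref{Ineq_10}) and then to linearize their products by (\ref{Ineq_11}) and (\ref{Ineq_12}). Setting $z=e^{\mathrm{i}x}$, the expansion (\ref{Ineq_15})--(\ref{Ineq_16}) of $\cos^{n}\!x$ can be written as $\tfrac{1}{2^{n}}\sum_{r}\binom{n}{r}R_{n-2r}(z)$, and (\ref{Ineq_13})--(\ref{Ineq_14}) writes $\sin^{m}\!x$ as $\tfrac{1}{2^{m}\mathrm{i}^{m}}\sum_{s}(-1)^{s}\binom{m}{s}\,Q_{m-2s}(z)$ when $m$ is odd and as the analogous $R$-sum when $m$ is even. (Equivalently, one may expand $(z+z^{-1})^{n}$ and $(z-z^{-1})^{m}$ directly by the binomial theorem, which is the content behind those two Lemmas.) Multiplying the two expansions produces a double sum of products $R_{n-2r}Q_{m-2s}$ or $R_{n-2r}R_{m-2s}$, each of which I collapse to a single $Q$ or $R$ of a multiple angle via (\ref{Ineq_12}) or (\ref{Ineq_11}).

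After linearization I would reorganize everything by the frequency $n+m-2k$. Grouping by the power $z^{n+m-2k}$, the coefficient of that power turns out to be $(-1)^{k}c_{k}$ with $c_{k}=\sum_{r}(-1)^{r}\binom{n}{r}\binom{m}{k-r}$, up to the global factor $2^{-(n+m)}\mathrm{i}^{-m}$; this inner sum is exactly the bracketed convolution appearing in (\ref{Ineq_17})--(\ref{Ineq_20}). The key structural fact is the symmetry $c_{n+m-k}=(-1)^{n}c_{k}$, which I would verify by the reindexing $r\mapsto n-r$. Pairing the power $z^{n+m-2k}$ with its reciprocal $z^{-(n+m-2k)}$ then reconstitutes $R_{n+m-2k}=2\cos((n+m-2k)x)$ or $Q_{n+m-2k}=2\mathrm{i}\sin((n+m-2k)x)$, and this pairing is what supplies the single outer sum over $k$ in each claimed identity.

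The four cases are then separated by two parities. The parity of $m$ decides the trigonometric type: odd $m$ gives $z^{n+m-2k}-z^{-(n+m-2k)}$, hence sines, while even $m$ gives cosines; simultaneously $\mathrm{i}^{-m}$ simplifies to $(-1)^{(m-1)/2}$ (odd $m$) or $(-1)^{m/2}$ (even $m$), which combined with $(-1)^{k}$ produces the stated prefactors $(-1)^{(m-1)/2+k}$ and $(-1)^{m/2+k}$. The parity of $n+m$ decides whether a central power $z^{0}$ survives: when $n+m$ is odd the upper summation limit is $(n+m-1)/2$ and no constant appears (cases (ii) and (iii)); when $n+m$ is even the term $k=(n+m)/2$ is a standalone constant, and for odd $m$ (case (i)) it vanishes because $z^{0}-z^{0}=0$, whereas for even $m$ (case (iv)) it survives as the extra summand carrying the factor $\tfrac12$ and sign $(-1)^{(2m+n)/2}$.

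The main obstacle I anticipate is purely the bookkeeping. Applying (\ref{Ineq_12}) correctly requires tracking the convention $\nu=\mathrm{sgn}(m-n)$ and the absolute value $|n-m|$, so that the two contributions to a given frequency $n+m-2k$ (one from the ``sum'' index and one from the ``difference'' index) combine into the single convolution $c_{k}$ rather than being double counted or mis-signed. Verifying that every sign, every power of $\mathrm{i}$, and the normalization of the isolated middle term in case (iv) all land exactly as written in (\ref{Ineq_17})--(\ref{Ineq_20}) is the delicate part; the underlying trigonometric manipulation itself is routine once the indices are pinned down.
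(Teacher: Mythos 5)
Your proposal is correct: the coefficient of $z^{n+m-2k}$ in $\frac{1}{2^{n+m}\mathrm{i}^m}(z+z^{-1})^n(z-z^{-1})^m$ is indeed $(-1)^k c_k$ with $c_k=\sum_r(-1)^r\binom{n}{r}\binom{m}{k-r}$, the reflection symmetry $c_{n+m-k}=(-1)^n c_k$ follows from $r\mapsto n-r$, and your parity analysis (type of function from the parity of $m$, presence of the constant term from the parity of $n+m$, vanishing of the $k=\frac{n+m}{2}$ term in case (i) since $c_{(n+m)/2}=(-1)^n c_{(n+m)/2}=0$, and the surviving constant $\frac{1}{2^{n+m}}(-1)^{\frac{2m+n}{2}}c_{(n+m)/2}$ in case (iv)) reproduces all four formulas exactly. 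However, your organization is genuinely different from the paper's. The paper never works with the full Laurent expansion: it multiplies the expansions from Lemmas 1.3 and 1.4 and linearizes each individual product $R_{n-2i}(z)\cdot Q_{m-2j}(z)$ (resp.\ $R\cdot R$) via (\ref{Ineq_11})--(\ref{Ineq_12}), which forces exactly the bookkeeping you fear in your last paragraph: tracking $\nu=\mathrm{sgn}\big(m-n-2(j-i)\big)$ and the index $|n-m-2(i-j)|$, and then re-assembling the convolution $c_k$ term by term through the binomial symmetries (\ref{Ineq_21}) and the index-matching implications (\ref{Ineq_22})--(\ref{Ineq_23}); it also obtains case (iii) not directly but by substituting $x\mapsto\pi/2-x$ in case (ii). Your route — expand $(z+z^{-1})^n(z-z^{-1})^m$ by the binomial theorem, read off $(-1)^kc_k$ at once, and pair $z^{\pm(n+m-2k)}$ by the symmetry — makes that entire sign/absolute-value case analysis disappear, and it treats the four cases uniformly; what the paper's version buys in exchange is that it stays with the real multiple-angle expansions of the Lemmas and exhibits the collected coefficient explicitly at each step. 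One small imprecision to fix when writing yours up: for odd $m$ it is not $\mathrm{i}^{-m}$ that equals $(-1)^{(m-1)/2}$ but $\mathrm{i}^{1-m}$, the extra factor $\mathrm{i}$ coming from $Q_k(z)=2\mathrm{i}\sin(kx)$; with that adjustment every prefactor lands as stated.
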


\vspace{-0.2 cm}
\noindent
\begin{proof}
{\rm \textbf{(i)}} Let us suppose that $n$ and $m$ are both odd, then:
$$
\begin{array}{l}
\cos^{n}x\cdot\sin^{m}x=
                                                                                   \\[1.0 ex]
= {\Bigg (}\!\!
\frac{2}{2^{n}}\displaystyle\sum\limits_{i=0}^{\frac{n-1}{2}}
\NBC{\!n\!}{\!i\!} \cos{\big (}(n-2i)x{\big )}
\!\!{\Bigg )}
\cdot
{\Bigg (}\!\!
\mbox{\small $\frac{2}{2^{m}}$}\displaystyle\sum\limits_{j=0}^{\frac{m-1}{2}}(-1)^{\frac{m-1}{2}+j} \NBC{\!m\!}{\!j\!} \sin{\big (}(m-2j)x{\big )} \!\!
{\Bigg )}                                                                          \\[2.0 ex]
=\frac{1}{2^{n+m}\eusm{i}}
{\Bigg (}
\displaystyle\sum\limits_{i=0}^{\frac{n-1}{2}} \NBC{\!n\!}{\!i\!}R_{n-2i}(z)
{\Bigg )}
\cdot
{\Bigg (}
\displaystyle\sum\limits_{j=0}^{\frac{m-1}{2}}(-1)^{\frac{m-1}{2}+j}\NBC{\!m\!}{\!j\!}Q_{m-2j}(z)
{\Bigg )}
                                                                                   \\[2.0 ex]
= \frac{1}{2^{n+m}\eusm{i}}
\displaystyle\sum\limits_{i=0}^{\frac{n-1}{2}}
\displaystyle\sum\limits_{j=0}^{\frac{m-1}{2}} (-1)^{\frac{m-1}{2}+j}\NBC{\!n\!}{\!i\!}
\NBC{\!m\!}{\!j\!}R_{n-2i}(z)\cdot Q_{m-2j}(z)
                                                                                    \\
=\frac{1}{2^{n+m}\eusm{i}}
\displaystyle\sum\limits_{i=0}^{\frac{n-1}{2}}
\displaystyle\sum\limits_{j=0}^{\frac{m-1}{2}} (-1)^{\frac{m-1}{2}+j}\NBC{\!n\!}{\!i\!}\NBC{\!m\!}{\!j\!}
{\Big (} \!
Q_{n+m-2(i+j)}(z)+\nu Q_{\mid n-m-2(i-j)\mid }  \!
{\Big )}
\end{array}
$$

\break

$$
\begin{array}{l}
= \frac{1}{2^{n+m}\eusm{i}}
{\Bigg (}
\displaystyle\sum\limits_{i=0}^{\frac{n-1}{2}}
\displaystyle\sum\limits_{j=0}^{\frac{m-1}{2}} (-1)^{\frac{m-1}{2}+j} \NBC{\!n\!}{\!i\!}
\NBC{\!m\!}{\!j\!}Q_{n+m-2(i+j)}(z)
                                                                                   \\[3.5ex]
\;\;\;\;\;\;\;\;\;\;\;\;\;+
\displaystyle\sum\limits_{i=0}^{\frac{n-1}{2}}\sum\limits_{j=0}^{\frac{m-1}{2}}
(-1)^{\frac{m-1}{2}+j}\NBC{\!n\!}{\!i\!}\NBC{\!m\!}{\!j\!}\nu Q_{\mid n-m-2(i-j)\mid }(z)
{\Bigg )},
                                                                                    \\[3.5ex]
\end{array}
$$
where $\nu\!=\!\mbox{\rm sgn}{\big (}m\!-\!n\!-\!2(j\!-\!i){\big )}$. Now we observe
the binomial coefficients next to $Q_{\mid n-m-2(i-j)\mid }(z)$. The products
of binomial coefficients may be written in the following way:
\begin{equation}
\label{Ineq_21}
\NBC{\!n\!}{\!i\!}\NBC{\!m\!}{\!j\!}=\NBC{\!n\!}{\!n-i\!}\NBC{\!m\!}{\!j\!}=\NBC{\!n\!}{\!i\!}\NBC{\!m\!}{\!m-j\!}=\NBC{\!n\!}{\!n-i\!}\NBC{\!m\!}{\!m-j\!}.
\end{equation}
We may notice that the sums of the lower numbers in the products of the binomial coefficients
of the previous equalities in (\ref{Ineq_21}) are $i+j$, $n-i+j$, $i+m-j$ and $n-i+m-j$. Let us mark the index
$\vert n-m-2(i-j) \vert $ with $d$. Our aim is to determine $k$ in such a way that $n+m-2k=d$.
Then we see two possibilities:

\medskip

\smallskip
1) when $n-m-2(i-j)>0$, then
\begin{equation}
\label{Ineq_22}
n+m-2k=n-m-2(i-j)\Longrightarrow k=m+i-j,
\end{equation}

\smallskip
2) when $n-m-2(i-j)<0$, then
\begin{equation}
\label{Ineq_23}
n+m-2k=-n+m-2(j-i)\Longrightarrow k=n-i+j.
\end{equation}

\vspace*{-1.0 mm}

\noindent
Therefore, while calculating
$
\displaystyle\sum\limits_{i=0}^{\frac{n-1}{2}}\sum\limits_{j=0}^{\frac{m-1}{2}}
(-1)^{\frac{m-1}{2}+j}\NBC{\!n\!}{\!i\!}\NBC{\!m\!}{\!j\!}\nu Q_{\mid n-m-2(i-j)\mid }(z)
$, on the basis of the implication (\ref{Ineq_22}), we~chose the product of the binomial coefficients
$\NBC{\!n\!}{\!i\!}\!\NBC{\!\!m\!\!}{\!m\!-\!j\!}$, and on the basis of the implication
(\ref{Ineq_23}), we chose $\NBC{\!n\!}{\!n-i\!}\!\NBC{\!m\!}{\!j\!}$,
i.e. we chose that product of binomial coefficients whose sum of the lower numbers equals to $k$.

\smallskip
\noindent
Finally, we get the requested result:
$$
\!\begin{array}{rcl}
\cos^nx \cdot \sin^mx
\!\!&\!\!=\!\!&\!\!
\frac{1}{2^{n+m}\eusm{i}}\!\!\displaystyle\sum\limits_{k=0}^{\frac{n+m}{2}-1}
(-1)^{\frac{m-1}{2}+k}
{\Bigg (}  \!
\displaystyle\sum\limits_{r=0}^{k}(-1)^{r} \NBC{\!n\!}{\!r\!} \! \NBC{\!\!m\!\!}{\!\!k-r\!\!}
Q_{n+m-2k}(z)  \!\!
{\Bigg )}
                                                                                    \\[3.5ex]
\!\!&\!\!=\!\!&\!\!
\frac{1}{2^{n+m-1}}\!\!\!\!\!\displaystyle\sum\limits_{k=0}^{\frac{n+m}{2}-\mbox{\tiny $1$}}
\!\!\!(\!-1\!)^{\frac{m-1}{2}+k}\!
{\Bigg (} \!
\displaystyle\sum\limits_{r=0}^{k}(\!-1\!)^{r} \!\NBC{\!n\!}{\!r\!} \! \NBC{\!\!m\!\!}{\!\!k-r\!\!}
\!\sin \!{\big (}(n\!+\!m\!-\!2k)x{\big )} \!\!
{\Bigg )} \!.
\end{array}
$$

\smallskip
\noindent
{\rm \textbf{(ii)}} Let $n$ be even and $m$ odd, then:
$$
\begin{array}{l}
\cos^{n}x\cdot\sin^{m}x=
                                                                                   \\[1.0 ex]
=
{\Bigg (}   \!\!
\frac{1}{2^{n}} \nbc{\!n\!}{\!\frac{n}{2}\!}+\frac{2}{2^{n}}
\displaystyle\sum\limits_{i=0}^{\frac{n}{2}-1} \NBC{\!n\!}{\!i\!} \cos{\big (}(n-2i)x{\big )}  \!\!
{\Bigg )}
                                                                                   \\[2.0 ex]
\;\;\;\cdot
{\Bigg (}  \!\!
\frac{2}{2^{m}}\displaystyle\sum\limits_{j=0}^{\frac{m-1}{2}}(-1)^{\frac{m-1}{2}+j} \NBC{\!m\!}{\!j\!} \sin{\big (}(m-2j)x{\big )} \!\!
{\Bigg )}                                                                          \\[2.0 ex]
=\frac{1}{2^{n+m}\eusm{i}}
{\Bigg (}   \!\!
\nbc{\!n\!}{\!\frac{n}{2}\!}
\displaystyle\sum\limits_{j=0}^{\frac{m-1}{2}}(-1)^{\frac{m-1}{2}+j}\NBC{\!m\!}{\!j\!}Q_{m-2j}(z)
                                                                                   \\[2.0 ex]
\;\;\;\;\;\;\;\;\;\;\;\;\;+
{\Big (}
\displaystyle\sum\limits_{i=0}^{\frac{n}{2}-1} \NBC{\!n\!}{\!i\!}R_{n-2i}(z)
{\Big )}
\cdot
{\Big (}
\displaystyle\sum\limits_{j=0}^{\frac{m-1}{2}}(-1)^{\frac{m-1}{2}+j}\NBC{\!m\!}{\!j\!}Q_{m-2j}(z)
{\Big )}   \!\!
{\Bigg )}
                                                                                      \\[2.0 ex]
=\frac{1}{2^{n+m}\eusm{i}}
{\Bigg (}   \!\!
\nbc{\!n\!}{\!\frac{n}{2}\!}
\displaystyle\sum\limits_{j=0}^{\frac{m-1}{2}}(-1)^{\frac{m-1}{2}+j}\NBC{\!m\!}{\!j\!}Q_{m-2j}(z)
                                                                                   \\[2.0 ex]
\;\;\;\;\;\;\;\;\;\;\;\;\;+
\displaystyle\sum\limits_{i=0}^{\frac{n}{2}-1}\sum\limits_{j=0}^{\frac{m-1}{2}} (-1)^{\frac{m-1}{2}+j}\NBC{\!n\!}{\!i\!}\NBC{\!m\!}{\!j\!}R_{n-2i}(z)
\cdot
Q_{m-2j}(z)  \!\!
{\Bigg )}
                                                                                     \\[2.0 ex]
= \frac{1}{2^{n+m}\eusm{i}}
{\Bigg (}  \!\!
\nbc{\!n\!}{\!\frac{n}{2}\!}
\displaystyle\sum\limits_{j=0}^{\frac{m-1}{2}}(-1)^{\frac{m-1}{2}+j}\NBC{\!m\!}{\!j\!}Q_{m-2j}(z)
                                                                                    \\[2.0 ex]
\;\;\;\;\;\;\;\;\;\;\;\;\;+
\displaystyle\sum\limits_{i=0}^{\frac{n}{2}-1}\sum\limits_{j=0}^{\frac{m-1}{2}} (-1)^{\frac{m-1}{2}+j}\NBC{\!n\!}{\!i\!}\NBC{\!m\!}{\!j\!}
{\Big (}
Q_{n+m-2(i+j)}(z)
+
\nu Q_{\mid n-m-2(i-j)\mid}(z)
{\Big )}  \!\!
{\Bigg )}
                                                                                       \\[2.0 ex]
=
\frac{1}{2^{n+m}\eusm{i}}
{\Bigg (} \!\!
\nbc{\!n\!}{\!\frac{n}{2}\!}
\displaystyle\sum\limits_{j=0}^{\frac{m-1}{2}}(-1)^{\frac{m-1}{2}+j}\NBC{\!m\!}{\!j\!}Q_{m-2j}(z)
                                                                                       \\[2.0 ex]
\;\;\;\;\;\;\;\;\;\;\;\;\;+
\displaystyle\sum\limits_{i=0}^{\frac{n}{2}-1}\sum\limits_{j=0}^{\frac{m-1}{2}} (-1)^{\frac{m-1}{2}+j}\NBC{\!n\!}{\!i\!}\NBC{\!m\!}{\!j\!}Q_{n+m-2(i+j)}(z)
                                                                                   \\[2.0 ex]
\;\;\;\;\;\;\;\;\;\;\;\;\;+
\displaystyle\sum\limits_{i=0}^{\frac{n}{2}-1}\sum\limits_{j=0}^{\frac{m-1}{2}}(-1)^{\frac{m-1}{2}+j}\NBC{\!n\!}{\!i\!}\NBC{\!m\!}{\!j\!}\nu Q_{\mid n-m-2(i-j)\mid }(z)  \!\!
{\Bigg )},
\end{array}
$$
where $\nu=\mbox{\rm sgn}{\big (}m-n-2(j-i){\big )}$. Looking at the products of the binomial coefficients
next to $Q_{\mid n-m-2(i-j)\mid }(z)$, analogously to the equalities (\ref{Ineq_21}) and the procedure
with the implications (\ref{Ineq_22}) and (\ref{Ineq_23}), we may conclude that it is valid:

\smallskip
$$
\begin{array}{rcl}
\cos^nx \cdot \sin^mx
        \!\!&\!\!=\!\!&\!\!
\frac{1}{2^{n+m}\eusm{i}}\!\!\!\!\!\displaystyle\sum\limits_{k=0}^{\frac{n+m-1}{2}}
\!(\!-1\!)^{\frac{m-1}{2}+k}
{\Bigg (}      \!
\displaystyle\sum\limits_{r=0}^{k}\!(-1)^{r} \!\NBC{\!\!n\!\!}{\!\!r\!\!}\!\NBC{\!\!m\!\!}{\!\!k-r\!\!}
Q_{n+m-2k}(z)  \!\!
{\Bigg )}                                                                          \\[3.0 ex]
          \!\!&\!\!=\!\!&\!\!
\frac{1}{2^{n+m-1}}\!\!\!\!\displaystyle\sum\limits_{k=0}^{\frac{n+m-1}{2}}
\!(\!-1\!)^{\frac{m-1}{2}+k} \!
{\Bigg (}      \!
\displaystyle\sum\limits_{r=0}^{k}\!(-1)^{r}\!\NBC{\!\!n\!\!}{\!\!r\!\!}\!\NBC{\!\!m\!\!}{\!\!k-r\!\!}
\!\sin\!{\big (}(\!n\!+\!m\!-\!2k\!)x{\big )}  \!\!
{\Bigg )}\!,
\end{array}
$$

\smallskip
\noindent
{\rm \textbf{(iii)}} Replacing $x$ by $\frac{\pi}{2}-x$ in formula (\ref{Ineq_18}), we get formula (\ref{Ineq_19}).

\medskip
\noindent
{\rm \textbf{(iv)}} If $n$ and $m$ are both even, then:
$$
\begin{array}{l}
\cos^{n}x\cdot\sin^{m}x=
                                                                                       \\[1.0 ex]
={\Bigg (}  \!\!
\frac{1}{2^{n}}\nbc{\!n\!}{\!\frac{n}{2}\!}
+\frac{2}{2^{n}}\displaystyle\sum\limits_{i=0}^{\frac{n}{2}-1} \NBC{\!n\!}{\!i\!}
\cos{\big (}(n-2i)x{\big )}   \!\!
{\Bigg )}                                                                              \\[3.0 ex]
\;\;\cdot
{\Bigg (} \!\!
\frac{1}{2^{m}}\nbc{\!m\!}{\!\frac{m}{2}\!}
+\frac{2}{2^{m}}\displaystyle\sum\limits_{j=0}^{\frac{m}{2}-1}(-1)^{\frac{m}{2}+j} \NBC{\!m\!}{\!j\!} \cos{\big(}(m-2j)x{\big)} \!\!
{\Bigg )}
                                                                                          \\[3.0 ex]
=\frac{1}{2^{n+m}}
{\Bigg (} \!\!
\nbc{\!n\!}{\!\!\frac{n}{2}\!\!} \! \nbc{\!m\!}{\!\!\frac{m}{2}\!\!}\!+\!\nbc{\!n\!}{\!\frac{n}{2}\!} \!\!\displaystyle\sum\limits_{j=0}^{\frac{m}{2}-1}(-1)^{\frac{m}{2}+j} \NBC{\!m\!}{\!j\!}R_{m-2j}(z)
                                                                                       \\[3.0 ex]
\;\;\;\;\;\;\;\;\;\;\;\;+\,\nbc{\!m\!}{\!\!\frac{m}{2}\!\!}\!\!\displaystyle\sum\limits_{i=0}^{\frac{n}{2}-1} \NBC{\!n\!}{\!i\!}R_{n-2i}(z)
                                                                                       \\[3.0 ex]
\;\;\;\;\;\;\;\;\;\;\;\;+
{\Big (}
\displaystyle\sum\limits_{i=0}^{\frac{n}{2}-1}\NBC{\!n\!}{\!i\!}R_{n-2i}(z)
{\Big )}
\!\cdot\!
{\Big (}
\displaystyle\sum\limits_{j=0}^{\frac{m}{2}-1}(-1)^{\frac{m}{2}+j}\NBC{\!m\!}{\!j\!}R_{m-2j}(z)
{\Big )}  \!\!
{\Bigg )}
                                                                                        \\[3.0 ex]
=\frac{1}{2^{n+m}}
{\Bigg (}\!\!
\nbc{\!n\!}{\!\!\frac{n}{2}\!\!} \nbc{\!m\!}{\!\!\frac{m}{2}\!\!}\!+\!\nbc{\!n\!}{\!\!\frac{n}{2}\!\!} \!\!\displaystyle\sum\limits_{j=0}^{\frac{m}{2}-1}(-1)^{\frac{m}{2}+j} \NBC{\!m\!}{\!j\!}R_{m-2j}(z)
\!+\!\nbc{\!m\!}{\!\!\frac{m}{2}\!\!}\!\!\displaystyle\sum\limits_{i=0}^{\frac{n}{2}-1}
\NBC{\!n\!}{\!i\!}R_{n-2i}(z)
                                                                                       \\[3.0 ex]
\;\;\;\;\;\;\;\;\;\;\;\;+
\displaystyle\sum\limits_{i=0}^{\frac{n}{2}-1}
\displaystyle\sum\limits_{j=0}^{\frac{m}{2}-1}(-1)^{\frac{m}{2}+j}\NBC{\!n\!}{\!i\!}
\NBC{\!m\!}{\!j\!}R_{n-2i}(z)
\cdot
R_{m-2j}(z)  \!\!
{\Bigg )}
                                                                                         \\[3.0 ex]
=\frac{1}{2^{n+m}}
{\Bigg (}  \!\!
\nbc{\!n\!}{\!\!\frac{n}{2}\!\!} \nbc{\!m\!}{\!\!\frac{m}{2}\!\!}\!+\!\nbc{\!n\!}{\!\!\frac{n}{2}\!\!} \!\!\displaystyle\sum\limits_{j=0}^{\frac{m}{2}-1}(-1)^{\frac{m}{2}+j} \NBC{\!m\!}{\!j\!}R_{m-2j}(z)
\!+\!\nbc{\!m\!}{\!\!\frac{m}{2}\!\!}
\!\!\displaystyle\sum\limits_{i=0}^{\frac{n}{2}-1} \NBC{\!n\!}{\!i\!}R_{n-2i}(z)
                                                                                       \\[3.0 ex]
\;\;\;\;\;\;\;\;\;\;\;\;+
\displaystyle\sum\limits_{i=0}^{\frac{n}{2}-1}
\displaystyle\sum\limits_{j=0}^{\frac{m}{2}-1}(-1)^{\frac{m}{2}+j}\NBC{\!n\!}{\!i\!}\NBC{\!m\!}{\!j\!}
{\Big (}
R_{n+m-2(i+j)}(z)+R_{\mid n-m-2(i-j) \mid}(z)
{\Big )}  \!\!
{\Bigg )}
                                                                                       \\[3.0 ex]
=\frac{1}{2^{n+m}}
{\Bigg (} \!\!
\nbc{\!n\!}{\!\!\frac{n}{2}\!\!} \nbc{\!m\!}{\!\!\frac{m}{2}\!\!}\!+\!\nbc{\!n\!}{\!\!\frac{n}{2}\!\!}\!\!\displaystyle\sum\limits_{j=0}^{\frac{m}{2}-1}(-1)^{\frac{m}{2}+j}\NBC{\!m\!}{\!j\!}R_{m-2j}(z)
\!+\!\nbc{\!m\!}{\!\!\frac{m}{2}\!\!}\!\!\displaystyle\sum\limits_{i=0}^{\frac{n}{2}-1}\NBC{\!n\!}{\!i\!} R_{n-2i}(z)
                                                                                       \\[3.0 ex]
\;\;\;\;\;\;\;\;\;\;\;\;+
\displaystyle\sum\limits_{i=0}^{\frac{n}{2}-1}
\displaystyle\sum\limits_{j=0}^{\frac{m}{2}-1}(-1)^{\frac{m}{2}+j}\NBC{\!n\!}{\!i\!}
\NBC{\!m\!}{\!j\!}R_{n+m-2(i+j)}(z)
                                                                                         \\[3.0 ex]
\;\;\;\;\;\;\;\;\;\;\;\;+
\displaystyle\sum\limits_{i=0}^{\frac{n}{2}-1}
\displaystyle\sum\limits_{j=0}^{\frac{m}{2}-1}(-1)^{\frac{m}{2}+j}\NBC{\!n\!}{\!i\!}
\NBC{\!m\!}{\!j\!}R_{\mid n-m-2(i-j) \mid}(z) \!\!
{\Bigg )}.
                                                                                       \\[3.0 ex]
\end{array}
$$
Looking at the products of the binomial coefficients next to $R_{\mid n-m-2(i-j)\mid }(z)$,
analogously to the equalities (\ref{Ineq_21}) and the procedure with the implications
(\ref{Ineq_22}) and (\ref{Ineq_23}), we may conclude that it is valid:

\newpage

$$
\begin{array}{rcl}
\cos^nx \cdot \sin^mx
                 \!\!&\!\!=\!\!&\!\!
\frac{1}{2^{n+m}}
{\Bigg (}  \!
\displaystyle\sum\limits_{k=0}^{\frac{n+m}{2}-1}(-1)^{\frac{m}{2}+k}
\displaystyle\sum\limits_{r=0}^{k}(-1)^{r}\NBC{\!n\!}{\!r\!}\NBC{\!m\!}{\!k-r\!}R_{n+m-2k}(z)
                                                                                      \\[2.5 ex]
\!\!&\!\!+\!\!&\!\!
\frac{1}{2}(-1)^{\frac{2m+n}{2}}
\displaystyle\sum\limits_{r=0}^{\frac{n+m}{2}}(-1)^{r}\NBC{\!n\!}{\!r\!}
\nbc{\!m\!}{\!\!\frac{n+m}{2}-r\!\!}R_{0} \!\!
{\Bigg )}
                                                                                       \\[2.5ex]
                  \!\!&\!\!=\!\!&\!\!
\frac{1}{2^{n+m-1}} \!
{\Bigg (}  \!
\displaystyle\sum\limits_{k=0}^{\frac{n+m}{2}-1}\!\!\!\!(\!-1\!)^{\frac{m}{2}+k}
\displaystyle\sum\limits_{r=0}^{k}(\!-1\!)^{r}
\!\NBC{\!\!n\!\!}{\!\!r\!\!}\!\NBC{\!\!m\!\!}{\!\!k-r\!\!}
\!\cos\!{\big (}(n\!+\!m\!-\!2k)x{\big )}
                                                                                       \\[2.5ex]
\!\!&\!\!+\!\!&\!\!
\frac{1}{2}(-1)^{\frac{2m+n}{2}}
\displaystyle\sum\limits_{r=0}^{\frac{n+m}{2}}(-1)^{r}\NBC{\!n\!}{\!r\!}
\nbc{\!\!m\!\!}{\!\!\frac{n+m}{2}-r\!\!}  \!\!
{\Bigg )},
                                                                                       \\[2.5ex]
\end{array}
$$ with the note that $\NBC{\!n\!}{\!i\!}\!\NBC{\!m\!}{\!j\!}\cdot R_{0}$ ($R_{0}=2$)
is written as a sum of two products of binomial coefficients equal to
$\NBC{\!n\!}{\!i\!}\!\NBC{\!m\!}{\!j\!}$, analogously to (\ref{Ineq_21}), whose sum of the lower numbers
equals to $\frac{n+m}{2}$.
\end{proof}

\section{A description of the method} 

\smallskip
\noindent
\textbf{I}\enskip\enskip
Our aim is to form a method of proving inequalities of the type (\ref{Ineq_1}) for \mbox{$x \in (0,\delta)$}
and $\delta = \delta_2 \!> \!0$. We will be using upward and downward Maclaurin approximations
of the sine and cosine functions determined in the Lemmas 1.1. and 1.2.

\smallskip
\noindent
\qquad
Let us observe the addend of the sum (\ref{Ineq_1}):
$
s_{i}(x)=
\alpha_{i}x^{p_{i}}\!\cos^{q_{i}}\!\!x\sin^{r_{i}}\!\!x,
$
where $\alpha_{i} \neq 0$ for $i=1,\ldots,n$.
Let us introduce the symbol
\begin{equation}
\label{Ineq_34}
\eu{m}_{i}\!=\!\left\{\!\!\!
\begin{array}{ll}
\frac{q_{i}+r_{i}}{2}\!-\!1, \!\!\!& \!\!\! \mbox{ when } \!q_{i}\! \mbox{ and } \!r_{i}\!
\mbox{ are\! both\! even\! or\! both\! odd,}
                                                                                            \\ [1.5ex]
\frac{q_{i}+r_{i}-1}{2}, \!\!\!\!\!\!& \!\!\!\!\!\! \mbox{ when } \!q_{i}\! \mbox{ and } \!r_{i}\!
\mbox{ have\! different\! parity.}
\end{array}
\right.
\end{equation}

\smallskip
\noindent
According to the Theorem 1.5. the addends $s_{i}(x)$ $(i\!=\!1,2,\ldots,n)$ are represented in four different ways
depending on the cases, so the following possibilities are given in the description of the method:

\medskip
\noindent
\textbf{1.} Let $q_{i}$ and $r_{i}$ be odd or let $q_{i}$ be even and $r_{i}$ odd. In both cases, it holds:
\begin{equation}
\label{Ineq_35}
\hspace*{-0.1 cm}
\begin{array}{rcl}
s_{i}(x)
           & \!\!\!\! = \!\!\!\!&
\alpha_{i}x^{p_{i}}\!\cos^{q_{i}}\!\!x\sin^{r_{i}}\!\!x \\ [1.5ex]
           & \!\!\!\! = \!\!\!\!&
\frac{\alpha_{i}x^{p_{i}}}{2^{q_{i}+r_{i}-1}} \!\!
\displaystyle\sum\limits_{k=0}^{\euft{m}_{i}}(-1)^{\frac{r_{i}-1}{2}+k} \!
\displaystyle\sum\limits_{r=0}^{k}(-1)^{r}\NBC{\!\!q_{i}\!\!}{\!\!r\!\!}\NBC{\!\!r_{i}\!\!}{\!\!k-r\!\!} \!\sin\!{\big(}\!(\!q_{i}\!+\!r_{i}\!-\!2k\!)x\!{\big)}
                                                                                       \\[1.5ex]
           & \!\!\!\! = \!\!\!\!&
\frac{x^{p_{i}}}{2^{q_{i}+r_{i}-1}} \!\!
\displaystyle\sum\limits_{k=0}^{\euft{m}_{i}}
\!\!{\Bigg(} \!
\displaystyle\sum\limits_{r=0}^{k}\alpha_{i}(-1)^{\frac{r_{i}-1}{2}+k+r}
\NBC{\!\!q_{i}\!\!}{\!\!r\!\!}\!\NBC{\!\!r_{i}\!\!}{\!\!k-r\!\!} \!\!
{\Bigg)}
\!\sin\!{\big(}\!(\!q_{i}\!+\!r_{i}\!-\!2k\!)x\!{\big)}.
\end{array}
\end{equation}
Let us mark with
$\beta_{k}=\displaystyle\sum\limits_{r=0}^{k}\alpha_{i}(-1)^{\frac{r_{i}-1}{2}+k+r}
\NBC{\!\!q_{i}\!\!}{\!\!r\!\!}\NBC{\!\!r_{i}\!\!}{\!\!k-r\!\!}$. Then, for every sub-addend $\beta_{k}\sin\!{\big(}(q_{i}+r_{i}-2k)x{\big)}$, depending on the sign of $\beta_{k}$, two cases are possible:

\medskip
\noindent
\quad
1) if $\beta_{k}>0$:
\begin{equation}
\label{Ineq_36}
\beta_{k}\sin\!{\big(}(q_{i}+r_{i}-2k)x{\big)}>
\beta_{k}\underline{T}^{\sin,0}_{\,4l^{(i)}_{k}+3}{\big(}(q_{i}+r_{i}-2k)x{\big)},
\end{equation}

\smallskip
\noindent
\quad
2) if $\beta_{k}<0$:
\begin{equation}
\label{Ineq_37}
\beta_{k}\sin\!{\big(}(q_{i}+r_{i}-2k)x{\big)}>
\beta_{k}\overline{T}^{\,\sin,0}_{4l^{(i)}_{k}+1}{\big(}(q_{i}+r_{i}-2k)x{\big)}.
\end{equation}

\smallskip
\noindent
Let the addend $s_{i}(x)$ be written in the form:
\begin{equation}
\label{Ineq_38}
s_{i}(x)=\frac{x^{p_{i}}}{2^{q_{i}+r_{i}-1}}
\displaystyle\sum\limits_{k=0}^{\euft{m}_{i}}\beta_{k}\sin\!{\big(}(q_{i}+r_{i}-2k)x{\big)}.
\end{equation}
Then it holds:
\vspace{-0.3 cm}
\begin{equation}
\label{Ineq_39}
s_{i}(x)
>
\mbox{\large $\tau$}_{i}(x) = \frac{x^{p_{i}}}{2^{q_{i}+r_{i}-1}}
\displaystyle\sum\limits_{k=0}^{\euft{m}_{i}}
\beta_{k}T^{\sin,0}_{4l^{(i)}_{k}+u}{\big(}(q_{i}+r_{i}-2k)x{\big)},
\end{equation}
where
$
u=\left\{
\begin{array}{ll}
3, & \beta_{k}>0, \\ [1.5ex]
1, & \beta_{k}<0
\end{array}
\right.
$, $l^{(i)}_{k} \in \mathbb{N}_{0}$ and $T \in \lbrace \overline{T}, \underline{T} \rbrace$.

\medskip
\noindent
\textbf{2.} Let $q_{i}$ be odd and $r_{i}$ even, then it holds:
\begin{equation}
\label{Ineq_40}
\hspace*{-0.1 cm}
\begin{array}{rcl}
s_{i}(x)
           & \!\!\!\! = \!\!\!\!&
\alpha_{i}x^{p_{i}}\!\cos^{q_{i}}\!\!x\sin^{r_{i}}\!\!x
                                                                                 \\ [1.5ex]
           & \!\!\!\! = \!\!\!\!&
\frac{\alpha_{i}x^{p_{i}}}{2^{q_{i}+r_{i}-1}} \!\!
\displaystyle\sum\limits_{k=0}^{\euft{m}_{i}}(-1)^{\frac{r_{i}}{2}+k} \!
\displaystyle\sum\limits_{r=0}^{k}(-1)^{r}\NBC{\!\!q_{i}\!\!}{\!\!r\!\!}\NBC{\!\!r_{i}\!\!}{\!\!k-r\!\!}
\!\cos\!{\big(}\!(\!q_{i}\!+\!r_{i}\!-\!2k)x\!{\big)}
                                                                                  \\[1.5ex]
           & \!\!\!\! = \!\!\!\!&  \frac{x^{p_{i}}}{2^{q_{i}+r_{i}-1}}\!\!\displaystyle\sum\limits_{k=0}^{\euft{m}_{i}}
{\Bigg(} \!
\displaystyle\sum\limits_{r=0}^{k}\!\!\alpha_{i}(-1)^{\frac{r_{i}}{2}+k+r}
\NBC{\!\!q_{i}\!\!}{\!\!r\!\!}\NBC{\!\!r_{i}\!\!}{\!\!k-r\!\!}  \!\!
{\Bigg)}
\!\cos\!{\big(}\!(\!q_{i}\!+\!r_{i}\!-\!2k\!)x\!{\big)}.
\end{array}
\end{equation}
Let us mark with
$\gamma_{k}=\displaystyle\sum\limits_{r=0}^{k}\alpha_{i}(-1)^{\frac{r_{i}}{2}+k+r}
\NBC{\!\!q_{i}\!\!}{\!\!r\!\!}\NBC{\!\!r_{i}\!\!}{\!\!k-r\!\!}$. Then, for every sub-addend
$\gamma_{k}\cos\!{\big(}(q_{i}+r_{i}-2k)x{\big)}$, depending on the sign of $\gamma_{k}$, two cases are possible:

\medskip
\noindent
\quad
1) if $\gamma_{k}>0$:
\begin{equation}
\label{Ineq_41}
\gamma_{k}\cos{\big(}(q_{i}+r_{i}-2k)x{\big)}>
\gamma_{k}\underline{T}^{\cos,0}_{\,4l^{(i)}_{k}+2}{\big(}(q_{i}+r_{i}-2k)x{\big)},
\end{equation}

\smallskip
\noindent
\quad
2) if $\gamma_{k}<0$:
\begin{equation}
\label{Ineq_42}
\gamma_{k}\cos{\big(}(q_{i}+r_{i}-2k)x{\big)}>
\gamma_{k}\overline{T}^{\,\cos,0}_{4l^{(i)}_{k}+0}{\big(}(q_{i}+r_{i}-2k)x{\big)}.
\end{equation}

\smallskip
\noindent
Let the addend $s_{i}(x)$ be written in the form:
\begin{equation}
\label{Ineq_43}
s_{i}(x)=\frac{x^{p_{i}}}{2^{q_{i}+r_{i}-1}}
\displaystyle\sum\limits_{k=0}^{\euft{m}_{i}}\gamma_{k}\cos{\big(}(q_{i}+r_{i}-2k)x{\big)}.
\end{equation}
Then it holds:
\vspace{-0.3 cm}
\begin{equation}
\label{Ineq_44}
s_{i}(x)
>
\mbox{\large $\tau$}_{i}(x) = \frac{x^{p_{i}}}{2^{q_{i}+r_{i}-1}}
\displaystyle\sum\limits_{k=0}^{\euft{m}_{i}}
\gamma_{k}T^{\cos,0}_{4l^{(i)}_{k}+v}{\big(}(q_{i}+r_{i}-2k)x{\big)},
\end{equation}
where
$
v=\left\{
\begin{array}{ll}
2, & \gamma_{k}>0, \\ [1.5ex]
0, & \gamma_{k}<0
\end{array}
\right.
$, $l^{(i)}_{k} \in \mathbb{N}_{0}$ and $T \in \lbrace \overline{T}, \underline{T} \rbrace$.

\smallskip
\noindent
\textbf{3.} Let $q_{i}$ and $r_{i}$ be even, then based on the previous case (under {\bf 2.}) it holds:
\begin{equation}
\label{Ineq_45}
\hspace*{-1.75 mm}
\begin{array}{rcl}
s_{i}(x)
           & \!\!\!\! = \!\!\!\!&
\frac{x^{p_{i}}}{2^{q_{i}+r_{i}-1}} \!\!
{\Bigg(}\!
\displaystyle\sum\limits_{k=0}^{\euft{m}_{i}}
{\Bigg(}\!
\displaystyle\sum\limits_{r=0}^{k}\alpha_{i}(-1)^{\frac{r_{i}}{2}+k+r}\NBC{\!\!q_{i}\!\!}{\!\!r\!\!}\NBC{\!\!r_{i}\!\!}{\!\!k-r\!\!}\!\!
{\Bigg)}\!
\!\cos\!{\big(}\!(q_{i}\!+\!r_{i}\!-\!2k)x\!{\big)}
                                                                                         \\[1.5ex]
           & \!\!\!\! + \!\!\!\!&
\frac{1}{2}(-1)^{\frac{2r_{i}+q_{i}}{2}}
\displaystyle\sum\limits_{r=0}^{\frac{q_{i}+r_{i}}{2}}(-1)^{r}
\NBC{\!\!q_{i}\!\!}{\!\!r\!\!}\nbc{\!\!r_{i}\!\!}{\!\frac{q_{i}+r_{i}}{2}\!-\!r\!}\!\!
{\Bigg)}
                                                                                         \\ [1.5ex]
           & \!\!\!\! > \!\!\!\!&
\mbox{\large $\tau$}_{i}(x) = \frac{x^{p_{i}}}{2^{q_{i}+r_{i}-1}}
{\Bigg(}\!
\displaystyle\sum\limits_{k=0}^{\euft{m}_{i}}
\gamma_{k} T^{\cos,0}_{4l^{(i)}_{k}+v}{\big(}(q_{i}+r_{i}-2k)x{\big)}
                                                                                         \\ [1.5ex]
           & \!\!\!\! + \!\!\!\!&
\frac{1}{2}(-1)^{\frac{2r_{i}+q_{i}}{2}}
\displaystyle\sum\limits_{r=0}^{\frac{q_{i}+r_{i}}{2}}(-1)^{r}
\NBC{\!\!q_{i}\!\!}{\!\!r\!\!}\nbc{\!\!r_{i}\!\!}{\!\!\frac{q_{i}+r_{i}}{2}-r\!\!}\!\!
{\Bigg)},
\end{array}
\end{equation}

\vspace*{-1.0 mm}

\noindent
where
$
v=\left\{
\begin{array}{ll}
2, & \gamma_{k}>0, \\ [1.5ex]
0, & \gamma_{k}<0
\end{array}
\right.
$, $l^{(i)}_{k} \in \mathbb{N}_{0}$ and $T \in \lbrace \overline{T}, \underline{T} \rbrace$.

\medskip
\noindent
\qquad
Comparing all the addends $s_{i}(x)$ $(i\!=\!1,2,\ldots,n)$ that appear in the sum (\ref{Ineq_1}),
according to the above stated cases, we get the polynomial
\begin{equation}
\label{Ineq_46}
P(x) = \sum_{i=1}^{n}{\mbox{\large $\tau$}_{i}(x)}
\end{equation}
\vspace{-0.15 cm}
{\big(}$\,$downward approximation of the function $f(x)$ in (\ref{Ineq_1})$\,${\big)}; i.e. it holds:
\begin{equation}
\label{Ineq_47}
f(x)> P(x).
\end{equation}
\vspace{-0.1 cm}
On the basis of the previous consideration, the following statement ensues:
\begin{theorem}
Let the following properties of the polynomial $P(x) \!=\! \displaystyle\sum_{i=1}^{n}{\mbox{\large $\tau$}_{i}(x)}$
\hfill

\vspace*{-0.25 cm}

\noindent
be true:

\medskip
\noindent$\;\;\;\;\;\;\;$
\begin{minipage}[b]{120.00 mm}
$\,(i)\,$ there is at least one~positive~real~root of the polynomial $P(x)\,$;
\end{minipage}

\medskip
\noindent$\;\;\;\;\;\;\;$
\begin{minipage}[b]{120.00 mm}
$(ii)$ $P(x)\!>\!0$ for $x\!\in\!(0,x^{\myast})$, where $x^{\myast}$
is the least positive~real~root \hfill

\hspace*{5.25 mm} of the polynomial $P(x)$;
\end{minipage}

\smallskip
\noindent
then it is valid
$$
f(x)>0
$$
for $x \in (0,x^{\myast})\subseteq (0,\delta)$.
\end{theorem}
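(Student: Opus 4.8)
The plan is to recognise Theorem 2.2 as the formal packaging of the construction carried out in Section 2, so that once the lower bound (\ref{Ineq_47}) is secured on the correct interval the conclusion follows by a one-line transitivity argument. The substantive content is not a new estimate but the verification that the strict inequality $f(x)>P(x)$ genuinely holds throughout $(0,x^{\ast})$; granting that, the deduction of $f(x)>0$ is immediate.

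First I would reassemble the inequality $f(x)>P(x)$ from the addend-by-addend analysis. For each index $i$ the summand $s_{i}(x)=\alpha_{i}x^{p_{i}}\cos^{q_{i}}\!x\,\sin^{r_{i}}\!x$ is expanded, by Theorem 1.5, into a finite sum of terms $\beta_{k}\sin((q_{i}+r_{i}-2k)x)$ or $\gamma_{k}\cos((q_{i}+r_{i}-2k)x)$, according to the parities of $q_{i}$ and $r_{i}$ (cases \textbf{1.}, \textbf{2.}, \textbf{3.}). To each such term one applies the sign-adapted Maclaurin bound: for a sine term, (\ref{Ineq_36}) when the coefficient is positive and (\ref{Ineq_37}) when it is negative; for a cosine term, (\ref{Ineq_41}) and (\ref{Ineq_42}) respectively. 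These are exactly the one-sided approximations of Lemmas 1.1 and 1.2, each strict on the open interval where that lemma applies. Replacing every trigonometric factor by its bound produces $\tau_{i}(x)$ with $s_{i}(x)>\tau_{i}(x)$, and summing over $i=1,\dots,n$ gives $f(x)>\sum_{i=1}^{n}\tau_{i}(x)=P(x)$, which is (\ref{Ineq_47}).

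The delicate point, and the one I expect to be the main obstacle, is the range on which these bounds are simultaneously legitimate. Each estimate from Lemmas 1.1 and 1.2 is valid only while its argument $(q_{i}+r_{i}-2k)x$ lies in an interval $[0,\sqrt{(N+3)(N+4)}]$, where $N=4l^{(i)}_{k}+u$ (respectively $+v$) is the order of the chosen Taylor polynomial. Hence the free parameters $l^{(i)}_{k}\in\mathbb{N}_{0}$ must be selected large enough that, after dividing by $q_{i}+r_{i}-2k$, each validity interval contains the whole interval on which we wish to conclude. Because $\sqrt{(N+3)(N+4)}\to\infty$ as $N\to\infty$, such a choice always exists, and with it (\ref{Ineq_47}) holds on all of $(0,\delta)$, the inclusion $(0,x^{\ast})\subseteq(0,\delta)$ then placing the argument on the region where every bound is in force.

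It remains to combine this with the hypotheses. By $(i)$ the polynomial $P$ has a positive real root, so its least positive real root $x^{\ast}$ is well defined; by $(ii)$ we have $P(x)>0$ for $x\in(0,x^{\ast})$. Transitivity with (\ref{Ineq_47}) then yields, for every $x\in(0,x^{\ast})$,
\[
f(x)>P(x)>0,
\]
which is precisely the claimed inequality. The entire difficulty has been absorbed into producing (\ref{Ineq_47}) on a sufficiently large interval through an appropriate choice of the approximation orders $l^{(i)}_{k}$; the passage from that bound to $f(x)>0$ is a single step.
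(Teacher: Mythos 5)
Your proposal is correct and takes essentially the same route as the paper: the paper offers no separate proof of this theorem at all, stating that it ``ensues'' from the preceding construction, i.e.\ from the downward approximation $f(x)>P(x)$ in (\ref{Ineq_47}) combined with hypothesis $(ii)$ by transitivity, which is exactly your argument. Your additional care in choosing the orders $l^{(i)}_{k}$ so that the bounds of Lemmas 1.1 and 1.2 are valid on the whole interval is a refinement the paper itself defers to Section 2.2 and to the worked applications, not a different approach.
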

\begin{remark}
Let us notice that hereby the proof of the inequality $f(x)\!>\!0$ has been obtained for
$x\!\in\!(0,\delta_{2})$, where $\delta_{2}\!=\!x^{\myast}$. The previous Theorem may be
applied in the interval $(\delta_{1}, 0)$ by introducing the substitute $t\!=\!-x$.
\end{remark}
\begin{remark}
If there is not at least one positive real root of the polynomial $P(x)$ and  $P(x)\!>\!0$ for $x \!\in\! (0,\infty)$, then it is valid
$f(x)\!>\!0$ for $x \!\in\! (0,\infty)$.
\end{remark}
The previous Theorem determines a method of proving a class of trigonometric inequalities
based on approximations of the sine and cosine functions by Maclaurin polynomials.

\medskip 
\noindent
\textbf{II}\enskip
We will consider completeness of the given method for the function $f(x)$, of the mixed trigonometric polynomial,
which is not a classical polynomial. Let us start from the following auxiliary statement.

\begin{lemma}
Let $f \!:\! (\delta_1, \delta_2) \!\longrightarrow\! \mathbb{R}$, $\delta_1 \!\leq\! 0 \!\leq\! \delta_2$
and $\delta_1\!<\!\delta_2$, be a real, non-constant, analytic function such that domain $(\delta_1, \delta_2)$ belongs to the interval of convergence of the function $f(x)$.

\smallskip
\noindent
If $f(0)\!\neq\!0$, then it holds:

\medskip
\noindent
{\bf 1.}
\begin{equation}
\label{eq_24}
f(0) > 0
\Longleftrightarrow
{\big (}\exists \, x^{\mbox{\tiny $\myplus$}} \!\in\! (0,\delta_2]{\big )} {\big (}\forall \, x \!\in\! (0, x^{\mbox{\tiny $\myplus$}}){\big )}
f(x) > 0,
\end{equation}

\noindent
{\bf 2.}
\begin{equation}
\label{eq_25}
f(0) < 0
\Longleftrightarrow
{\big (}\exists \, x^{\mbox{\tiny $\myplus$}} \!\in\! (0,\delta_2]{\big )} {\big (}\forall \, x \!\in\! (0, x^{\mbox{\tiny $\myplus$}}){\big )}
f(x) < 0.
\end{equation}

\smallskip
\noindent
If $f(0) \!= \ldots =\! f^{(n-1)}(0) = 0 \;\wedge\; f^{(n)}(0) \neq 0$, for some $n \in \mathbb{N}$, then it holds:

\medskip
\noindent
{\bf 3.}
\begin{equation}
\label{eq_26}
f^{(n)}(0) > 0
\Longleftrightarrow
{\big (}\exists \, x^{\mbox{\tiny $\myplus$}} \!\in\! (0,\delta_2]{\big )} {\big (}\forall \, x \!\in\! (0, x^{\mbox{\tiny $\myplus$}}){\big )}
f(x) > 0,
\end{equation}

\noindent
{\bf 4.}
\begin{equation}
\label{eq_27}
f^{(n)}(0) < 0
\Longleftrightarrow
{\big (}\exists \, x^{\mbox{\tiny $\myplus$}} \!\in\! (0,\delta_2]{\big )} {\big (}\forall \, x \!\in\! (0, x^{\mbox{\tiny $\myplus$}}){\big )}
f(x) < 0.
\end{equation}

\end{lemma}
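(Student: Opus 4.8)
The plan is to exploit the analyticity of $f$ on $(\delta_1,\delta_2)$, which is assumed to lie inside the interval of convergence, so that $f$ is continuous there and coincides with its Maclaurin series $f(x)=\sum_{k\geq 0}\frac{f^{(k)}(0)}{k!}x^{k}$ throughout the domain. Each of the four assertions is a biconditional whose forward direction I would obtain from a local sign argument and whose backward direction I would obtain from a one-sided limit. I would first settle the two non-vanishing cases (1 and 2), and then reduce the higher-order cases (3 and 4) to them by factoring out a power of $x$.

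For cases 1 and 2, assume $f(0)\neq 0$. For the forward implication in case 1, if $f(0)>0$ then, by continuity of $f$ at $0$, there is some $\eta>0$ with $f(x)>0$ for all $x\in(0,\eta)\cap(0,\delta_2)$; taking $x^{\myplus}=\min\{\eta,\delta_2\}\in(0,\delta_2]$ gives the right-hand side. For the backward implication, if $f(x)>0$ on $(0,x^{\myplus})$ then $f(0)=\lim_{x\to 0^{+}}f(x)\geq 0$ by continuity, and since $f(0)\neq 0$ we conclude $f(0)>0$. Case 2 is identical with every inequality reversed.

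For cases 3 and 4, assume $f(0)=\cdots=f^{(n-1)}(0)=0$ and $f^{(n)}(0)\neq 0$, so the Maclaurin series begins at order $n$. I would introduce
$$
g(x)=\frac{f(x)}{x^{n}}\ \ (x\neq 0),\qquad g(0)=\frac{f^{(n)}(0)}{n!},
$$
which is again analytic on $(\delta_1,\delta_2)$, being the sum of the shifted power series $\sum_{k\geq 0}\frac{f^{(n+k)}(0)}{(n+k)!}x^{k}$, and which satisfies $g(0)\neq 0$. Since $x^{n}>0$ for every $x>0$, the sign of $f(x)$ agrees with the sign of $g(x)$ on $(0,\delta_2)$, while $\mbox{\rm sgn}\,g(0)=\mbox{\rm sgn}\,f^{(n)}(0)$. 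Applying the already-proved cases 1 and 2 to $g$ then yields (3) and (4) verbatim.

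The argument is short because analyticity supplies the continuity and the series expansion outright; the only step needing genuine care is the reduction, namely checking that $g$ extends analytically across $0$ with the stated value $g(0)=f^{(n)}(0)/n!$ and that dividing by $x^{n}$ leaves the sign of $f$ unchanged on a punctured right neighbourhood of $0$. Once this factorization is justified, both directions of all four cases follow uniformly from continuity together with a one-sided limit.
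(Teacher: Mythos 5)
Your proof is correct, but it takes a genuinely more elementary route than the paper's. The paper works with the Maclaurin expansion in all four cases: for case 1 it factors $f(h)=f(0)\bigl(1+g(h)\bigr)$ with $g$ analytic and vanishing at $0$, extracts explicit bounds $|g(h)|<Mh<1/2$ on a right neighbourhood, and concludes $f(h)>f(0)/2>0$; the converse is run as a proof by contradiction using the same factorization; cases 3 and 4 repeat the entire estimate with $f(h)=\frac{f^{(n)}(0)}{n!}h^{n}\bigl(1+g(h)\bigr)$, and the negative-sign cases follow by passing to $-f$. You instead dispose of cases 1 and 2 by bare continuity (forward direction) and a one-sided limit together with $f(0)\neq 0$ (backward direction), with no series expansion at all, and then reduce cases 3 and 4 to cases 1 and 2 through the analytic quotient $g(x)=f(x)/x^{n}$ with $g(0)=f^{(n)}(0)/n!$, using that $x^{n}>0$ preserves signs on $(0,\delta_2)$. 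What your route buys is modularity and economy: the power series enters exactly once, to justify that $g$ extends analytically across $0$, and the sign bookkeeping for the higher-order cases is inherited from the already-proved cases rather than re-derived; what the paper's route buys is a self-contained quantitative estimate (the lower bound $f(h)>f(0)/2$ near $0$), in the spirit of the isolated-zeros argument it cites from Godement. One pedantic point you should patch: when you apply the already-proved cases 1 and 2 to $g$, the lemma as stated requires a \emph{non-constant} analytic function, and $g$ is constant exactly when $f(x)=cx^{n}$; since your proofs of cases 1 and 2 use only continuity and never invoke non-constancy, this is harmless, but it deserves an explicit remark (either note that the non-constancy hypothesis is not used there, or treat the constant case of $g$ separately, where both sides of the equivalence are immediate).
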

\begin{proof}
Let $f(x)$ be a non-constant function with Maclaurin series expansion
\begin{equation}
\label{eq_28}
f(h) = f(0) + \frac{f'(0)}{1!}h + \frac{f''(0)}{2!}h^2 + \ldots, \qquad (h > 0).
\end{equation}
In the proof we use the method from \cite{Godement_2004} (pages 157, 158), by which
it has been shown that zeros of non-constant analytic function are isolated.

\smallskip
\noindent
{\bf 1.} {\boldmath $(\Rightarrow)$} Let $f(0) \!>\! 0$. Let us note (\ref{eq_28})
in the form
\begin{equation}
\label{eq_29}
f(h) = f(0){\big (}1 + g(h){\big )},
\end{equation}
where $g(h)$ is the real analytical function. Then there exist $x^{\mbox{\tiny $\myplus$}} > 0$
and $M > 0$ such that $|g(h)| < Mh$ and $M h < 1/2$ for every
$h \in (0, x^{\mbox{\tiny $\myplus$}})$. Hence, we conclude that $f(h) = f(0) + f(0) \, g(h) > f(0) - f(0) Mh
> f(0) / 2 > 0$ for $h \in (0, x^{\mbox{\tiny $\myplus$}})$.
\mbox{\boldmath $(\Leftarrow)$}~Let us suppose that~there exists $x^{\mbox{\tiny $\myplus$}} \!\in\! (0,\delta_2]$ such that
for every $x \!\in\! (0, x^{\mbox{\tiny $\myplus$}})$ it holds $f(x) \!>\! 0$. Consequently, it ensues that $f(x)$
is a positive function in arbitrarily small right-hand neighbourhood of the point $x\!=\!0$.
Let $g(x)$ be~the~function considered in the previous part of the proof. Then there exist
$M > 0$ and $x_{1} \!\in\! (0, x^{\mbox{\tiny $\myplus$}}]$
such that for every $x \!\in\! (0, x_{1})$ it holds $|g(x)| < Mx < 1/2$. If it holds
$f(0) \!<\! 0$, then for $x \!\in\! (0, x_1) \subseteq (0, x^{\mbox{\tiny $\myplus$}})$
we have the contradiction $f(x)\!=\!f(0){\big (}1 + g(x){\big )}\!<\! 0$.
Hereby it has been proved $f(0) \!>\! 0$.

\smallskip
\noindent
{\bf 2.} It is sufficient to consider function $-f(x)$ instead of function $f(x)$ and to apply {\bf 1.}

\smallskip
In the case $f(0) \!= \ldots =\! f^{(n-1)}(0) = 0 \;\wedge\; f^{(n)}(0) \!\neq\! 0$,
for some $n \in \mathbb{N}$, the point $x=0$ has been isolated zero of order $n$.
Let us note further (\ref{eq_28})
in the form
\begin{equation}
\label{eq_30}
f(h)
=
\frac{f^{(n)}(0)}{n!}h^n
+
\frac{f^{(n+1)}(0)}{(n+1)!}h^{n+1}
+
\ldots,
\qquad (h > 0).
\end{equation}

\smallskip
\noindent
{\bf 3.} {\boldmath $(\Rightarrow)$} Let $f^{(n)}(0) \!>\! 0$. Let us note (\ref{eq_30}) in the form
\begin{equation}
\label{eq_31}
f(h) = \frac{f^{(n)}(0)}{n!} \, h^{n}{\big (}1 + g(h){\big )},
\end{equation}
where $g(h)$ is the real analytical function. Then there exist $x^{\mbox{\tiny $\myplus$}} > 0$
and \mbox{$M > 0$} such that $|g(h)|\!<\!Mh$ and $Mh\!<\!1/2$  for every
$h \in (0, x^{\mbox{\tiny $\myplus$}})$. Hence, we conclude that
$f(h)\!=\!\frac{f^{(n)}(0)}{n!}\,h^n\!+\!\frac{f^{(n)}(0)}{n!}\,h^n g(h)\!>\!\frac{f^{(n)}(0)}{n!}\,h^n\!-\!\frac{f^{(n)}(0)}{n!}\,h^n Mh\!>\!\frac{1}{2}\frac{f^{(n)}(0)}{n!}\,h^n\!>\!0$
for~$h \in (0, x^{\mbox{\tiny $\myplus$}})$.
\mbox{\boldmath $(\Leftarrow)$}~Let us suppose that~there exists $x^{\mbox{\tiny $\myplus$}} \!\in\! (0,\delta_2]$ such that
for every $x \!\in\! (0, x^{\mbox{\tiny $\myplus$}})$ it holds $f(x) \!>\! 0$. Consequently, it ensues that $f(x)$
is a positive function in arbitrarily small right-hand neighbourhood of the point $x\!=\!0$. Let $g(x)$ be
the function considered in the previous part of the proof. Then there exist $M\!>\!0$ and $x_{1} \!\in\! (0, x^{\mbox{\tiny $\myplus$}}]$
such that for every $x \!\in\! (0, x_{1})$ it holds $|g(x)|\!<\!Mx\!<\!1/2$. If it holds
$f^{(n)}(0)\!<\!0$, then for $x\!\in\!(0, x_1)\!\subseteq\!(0, x^{\mbox{\tiny $\myplus$}})$~we~have
the~contradiction $f(x)\!=\!\frac{f^{(n)}(0)}{n!}\,x^{n}{\big (}1\!+\!g(x){\big )}\!<\!0$.
Hereby it has~been~proved~\mbox{$f^{(n)}(0)\!>\!0$}.

\smallskip
\noindent
{\bf 4.}  It is sufficient to consider function $-f(x)$ instead of function $f(x)$ and to apply~{\bf 3.}
\end{proof}
Based on the previous statement it follows:
\begin{theorem}
Let $f \!:\! (\delta_1, \delta_2) \!\longrightarrow\! \mathbb{R}$, $\delta_1 \!\leq\! 0 \!\leq\! \delta_2$
and $\delta_1\!<\!\delta_2$, be a real, non-constant, analytic function such that domain $(\delta_1, \delta_2)$
belongs to the interval of convergence of the function $f(x)$.$\,$Then the equivalences
\begin{equation}
\label{eq_32}
\begin{array}{lc}
                            &
{\big (}\exists \, x^{\mbox{\tiny $\myplus$}} \!\in\! (0,\delta_2]{\big )} {\big (}\forall \, x \!\in\! (0, x^{\mbox{\tiny $\myplus$}}){\big )} f(x) > 0
                                                                                                              \\[1.0 ex]
\!\!\!\!\!\!\!\!
\Longleftrightarrow         &                                                                                 \\[1.5 ex]
                            &
f(0) > 0
\;\;\vee\;\;
{\Big (}{\big (}\exists \, n \!\in\! \mathbb{N}{\big )} f(0) \!= \ldots =\! f^{(n-1)}(0) = 0 \;\wedge\; f^{(n)}(0) > 0{\Big )}
\end{array}
\end{equation}
or
\begin{equation}
\label{eq_33}
\begin{array}{lc}
                            &
{\big (}\exists \, x^{\mbox{\tiny $\myplus$}} \!\in\! (0,\delta_2]{\big )} {\big (}\forall \, x \!\in\! (0, x^{\mbox{\tiny $\myplus$}}){\big )} f(x) < 0
                                                                                                              \\[1.0 ex]
\!\!\!\!\!\!\!\!
\Longleftrightarrow         &                                                                                 \\[1.5 ex]
                            &
f(0) < 0
\;\;\vee\;\;
{\Big (}{\big (}\exists \, n \!\in\! \mathbb{N}{\big )} f(0) \!= \ldots =\! f^{(n-1)}(0) = 0 \;\wedge\; f^{(n)}(0) < 0{\Big )}
\end{array}
\end{equation}

\smallskip
\noindent
are true.
\end{theorem}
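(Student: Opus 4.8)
The plan is to obtain this Theorem as a direct consequence of the preceding Lemma, the only additional ingredient being the observation that the cases covered by that Lemma are exhaustive once $f$ is non-constant and analytic. I will prove equivalence (\ref{eq_32}) in full; equivalence (\ref{eq_33}) then follows by symmetry.

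First I would record the key dichotomy. Since $f$ is non-constant and analytic with $(\delta_1,\delta_2)$ contained in its interval of convergence, not all Maclaurin coefficients of $f$ can vanish: if $f(0)=f'(0)=f''(0)=\cdots=0$, then the expansion (\ref{eq_28}) is identically zero and hence $f\equiv 0$ on $(\delta_1,\delta_2)$, contradicting non-constancy. Consequently there is a well-defined least index $N\in\mathbb{N}_{0}$ with $f^{(N)}(0)\neq 0$, while $f^{(j)}(0)=0$ for $0\le j<N$.

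Next I would prove (\ref{eq_32}) by splitting on $N$. For the implication $(\Leftarrow)$: if the right-hand side holds because $f(0)>0$, then $N=0$ and part {\bf 1.} of the previous Lemma (direction $\Rightarrow$) supplies an $x^{+}$ with $f>0$ on $(0,x^{+})$; if instead the second disjunct holds, i.e. $f(0)=\cdots=f^{(n-1)}(0)=0$ and $f^{(n)}(0)>0$ for some $n$, then necessarily $n=N$, and part {\bf 3.} (direction $\Rightarrow$) supplies the required $x^{+}$. For the implication $(\Rightarrow)$: assume there is $x^{+}\in(0,\delta_2]$ with $f(x)>0$ on $(0,x^{+})$. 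If $N=0$, then $f(0)\neq 0$, and part {\bf 1.} (direction $\Leftarrow$) forces $f(0)>0$, which is the first disjunct. If $N\ge 1$, then $f(0)=\cdots=f^{(N-1)}(0)=0$ with $f^{(N)}(0)\neq 0$, and part {\bf 3.} (direction $\Leftarrow$) forces $f^{(N)}(0)>0$, which is the second disjunct with $n=N$. This establishes (\ref{eq_32}).

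Finally, for (\ref{eq_33}) I would apply the already-proved equivalence (\ref{eq_32}) to the function $-f$, which is again non-constant, analytic, and defined on the same interval; since $f(x)<0\Leftrightarrow(-f)(x)>0$ and $(-f)^{(k)}(0)=-f^{(k)}(0)$, the statement (\ref{eq_33}) is exactly (\ref{eq_32}) read for $-f$. (Alternatively, one repeats the case analysis using parts {\bf 2.} and {\bf 4.} of the previous Lemma.) I do not expect a genuine obstacle: the argument is bookkeeping built on the Lemma, and the only real content is the exhaustiveness observation in the second paragraph, where non-constancy and analyticity are used. The single point requiring care is to match the existentially quantified $n$ in the statement with the concrete first-nonvanishing index $N$, so that both directions invoke the same case of the Lemma with a consistent choice of $x^{+}$.
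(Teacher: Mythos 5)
Your proof is correct and follows exactly the route the paper intends: the paper offers no written argument beyond the remark that the theorem follows from the preceding Lemma, and your proposal supplies precisely that derivation, invoking parts \textbf{1.}--\textbf{4.} in the same way. The one piece of content you make explicit---that non-constancy plus analyticity (with the domain inside the interval of convergence) forces a least index $N$ with $f^{(N)}(0)\neq 0$, so the Lemma's cases are exhaustive---is exactly the step the paper leaves implicit, and your handling of it is sound.
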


\break

\noindent
\begin{remark}
In the following consideration we observe that $f(x)$ is a mixed trigonometric polynomial which is not a classical polynomial. Such functions
are analytic with the interval of convergence which is determined as a set of real numbers. $\!\!\!\!$ That is why the problem~whether there
is an interval $(0, x^{\mbox{\tiny $\myplus$}})$ for the mixed trigonometric polynomial $f(x)$, for some \mbox{$x^{\mbox{\tiny $\myplus$}} \!>\! 0$},
in which $f(x)$ is of the constant sign, represents a decidable problem
based on the equivalences $(\ref{eq_32})$ and $(\ref{eq_33})$.
\end{remark}
We will consider completeness of the given method for the function $f(x)$, of the mixed trigonometric polynomial,
which is not a classical polynomial under the assumption
\begin{equation}
\label{Ineq_48}
{\big (}\exists  \, x^{\mbox{\tiny $\myplus$}} \!\in\! (0, \delta){\big )}
{\big (}\forall x \!\in\! (0, x^{\mbox{\tiny $\myplus$}}){\big )}
f(x) \!>\! 0.
\end{equation}
We will show that for the function $f(x)$ in every sub-interval
$\left(a, b\right) \!\subset\! \left(0, x^{\mbox{\tiny $\myplus$}}\right)$, where
\mbox{$0 \!<\! a \!<\! b \!<\! x^{\mbox{\tiny $\myplus$}}$}, there exists a positive downward
polynomial approximation $P(x)$. Let all the indexes
$l_{k}^{(i)}$ {\big (}\mbox{$i \!\in\! \{1,\ldots,n\}$}
and \mbox{$k \in \{0,\ldots,\eu{m}_{i}\}$}{\big )} have the same value $l_{k}^{(i)} \!=\! \mbox{\small $K$} \!\in\! N_{0}$.
As a function of index $\mbox{\small $K$}$, a polynomial
$
P^{\mbox{\tiny $\left[ K \right]$}}(x)
=
\sum_{i=1}^{n}{\mbox{\large $\tau$}^{\mbox{\tiny $\left[ K \right]$}}_{i}(x)}
$ is formed.
Previously formed polynomial $P^{\mbox{\tiny $\left[ K \right]$}}(x)$ of index $\mbox{\small $K$}$
is downward polynomial approximation of the function $f(x)$ such that it is valid
\begin{equation}
\label{Ineq_49}
\lim\limits_{K \longrightarrow \infty} P^{\mbox{\tiny $\left[ K \right]$}}(x) = f(x),
\end{equation}
where the previous convergence is uniform in $[\,0, x^{\mbox{\tiny $\myplus$}}]$.
The uniform property of the convergence follows based on the fact that for every $i=1,\ldots,n$
the convergence
\begin{equation}
\label{Ineq_50}
\lim\limits_{K \longrightarrow \infty} \mbox{\large $\tau$}^{\mbox{\tiny $\left[ K \right]$}}_{i}(x)
=
s_{i}(x),
\end{equation}
is uniform in $[\,0, x^{\mbox{\tiny $\myplus$}}]$.
Based on \mbox{$P^{\mbox{\tiny $\left[ K \right]$}}(x) < f(x)$} and
\mbox{$P^{\mbox{\tiny $\left[ K \right]$}}(x) \rightrightarrows f(x)$},
in $[\,0, x^{\mbox{\tiny $\myplus$}}]$, we get the following statement about the completeness of the discussed method.
\begin{theorem}
Let for the mixed trigonometric polynomial $f(x)$ which is not a classical polynomial, the condition $(\ref{Ineq_48})$ is valid.
Then in every interval $\left(a, b\right) \!\subset\! \left(0, x^{\mbox{\tiny $\myplus$}}\right)$, where \mbox{$0 \!<\! a \!<\! b \!<\! x^{\mbox{\tiny $\myplus$}}$},
there exists downward polynomial approximation $P^{\mbox{\tiny $\left[ K \right]$}}(x)$ of the index
$\mbox{\small $K$}$ such that
\begin{equation}
\label{Ineq_51}
{\big (}\forall x \!\in\! (a,b){\big )} \, f(x) > P^{\mbox{\tiny $\left[ K \right]$}}(x) > 0.
\end{equation}
\end{theorem}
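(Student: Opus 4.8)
The plan is to derive the statement from the two properties of the family $P^{\mbox{\tiny $[K]$}}(x)$ that have been assembled immediately before the theorem, combined with a compactness argument on the closed subinterval $[a,b]$. The first property is that each $P^{\mbox{\tiny $[K]$}}$ is a \emph{strict} downward approximation, i.e. $f(x) > P^{\mbox{\tiny $[K]$}}(x)$ on $(0,x^{\mbox{\tiny $\myplus$}})$; this follows by summing the termwise strict estimates $s_i(x) > \tau_i(x)$ from (\ref{Ineq_39}), (\ref{Ineq_44}) and (\ref{Ineq_45}), which is precisely (\ref{Ineq_47}). The second property is the uniform convergence $P^{\mbox{\tiny $[K]$}} \rightrightarrows f$ on $[0,x^{\mbox{\tiny $\myplus$}}]$ as $K \to \infty$, recorded in (\ref{Ineq_49}). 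Granting these, the remaining work is to show that for $K$ large enough the approximation is also \emph{positive} on $(a,b)$.

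First I would fix $(a,b)$ with $0 < a < b < x^{\mbox{\tiny $\myplus$}}$ and pass to the compact interval $[a,b] \subset (0,x^{\mbox{\tiny $\myplus$}})$. By the hypothesis (\ref{Ineq_48}) the function $f$ is positive on all of $(0,x^{\mbox{\tiny $\myplus$}})$, and since $f$ is continuous it attains a minimum $m := \min_{x \in [a,b]} f(x)$ on the compactum $[a,b]$, with $m > 0$ because $[a,b]$ sits strictly inside the interval of positivity. Next I would invoke the uniform convergence: applying (\ref{Ineq_49}) with $\varepsilon = m/2 > 0$, there is an index $K$ such that $|f(x) - P^{\mbox{\tiny $[K]$}}(x)| < m/2$ for every $x \in [a,b]$. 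For this $K$ and all $x \in [a,b]$ one then gets $P^{\mbox{\tiny $[K]$}}(x) > f(x) - m/2 \geq m - m/2 = m/2 > 0$. Combining this lower bound with the strict downward inequality $f(x) > P^{\mbox{\tiny $[K]$}}(x)$, which holds on $(a,b) \subset (0,x^{\mbox{\tiny $\myplus$}})$, yields $f(x) > P^{\mbox{\tiny $[K]$}}(x) > 0$ for all $x \in (a,b)$, i.e. exactly (\ref{Ineq_51}).

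The compactness estimate above is routine; the substantive content, and the step I expect to be the main obstacle, is the uniform convergence (\ref{Ineq_49})--(\ref{Ineq_50}) on which the whole argument rests. Its justification relies on the fact that the Maclaurin approximations of Lemmas 1.1 and 1.2 are valid on the intervals $[0,\sqrt{(4K+\cdots)(4K+\cdots)}]$, whose length grows without bound with $K$; hence for all sufficiently large $K$ every argument $(q_i+r_i-2k)x$ with $x \in [0,x^{\mbox{\tiny $\myplus$}}]$ lies inside the range where $\tau_i^{\mbox{\tiny $[K]$}}$ genuinely approximates $s_i$. Together with the classical fact that the Taylor polynomials of $\sin$ and $\cos$ converge to their functions uniformly on any bounded set, this delivers the termwise uniform convergence $\tau_i^{\mbox{\tiny $[K]$}} \rightrightarrows s_i$ of (\ref{Ineq_50}); summing the finitely many indices $i = 1,\ldots,n$ preserves uniformity and gives (\ref{Ineq_49}), completing the proof.
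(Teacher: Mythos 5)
Your proposal is correct and takes essentially the same route the paper intends: the theorem is obtained by combining the strict downward bound (\ref{Ineq_47}) with the uniform convergence (\ref{Ineq_49}) on the compact set $[a,b]$, where the continuous function $f$ attains a positive minimum. The paper leaves this compactness/minimum step and the justification of (\ref{Ineq_50}) implicit (``Based on $P^{[K]}(x) < f(x)$ and $P^{[K]}(x) \rightrightarrows f(x)$ \dots we get the following statement''), so your write-up simply supplies the details the authors omit.
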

\begin{remark}
Under the assumptions of the previous Theorem for the function $f(x)$ it follows the completeness
of the method in the sense that it is possible in every interval $(a,b) \subset (0,x^{\mbox{\tiny $\myplus$}})$,
where \mbox{$0 \!<\! a \!<\! b \!<\! x^{\mbox{\tiny $\myplus$}}$}, to prove the inequality
$f(x) \!>\! 0$ by using some downward approximation $P^{\mbox{\tiny $\left[ K \right]$}}(x)$.
\end{remark}

\noindent
\textbf{2.1. Improving of the method}

\medskip
\noindent
\qquad
Let us emphasise that the previous method may be applied to the functions of the form
$f(x)=\sum_{i=1}^{n}\alpha_{i}h_{i}(x)\!\cos^{q_{i}}\!\!x\sin^{r_{i}}\!\!x$
for $x \in (0,\delta)$, where $h_{i}(x)$ is a polynomial, in such a way that two
possibilities exist. The first possibility is when the polynomial $h_{i}(x)$ is of
the constant sign in the given interval and then we can see the cases $h_{i}(x)>0$ or $h_{i}(x)<0$,
and we do that by analogy with the previously described procedure. On the other hand,
we have a possibility that the polynomial $h_{i}(x)$ is not of the constant sign.
Then, $\alpha_{i}h_{i}(x)\!\cos^{q_{i}}\!\!x\sin^{r_{i}}\!\!x$ may be written as
a sum of addends of the form $s_{i}(x)$, and then we can apply the previously
described method for each of those addends.

\bigskip
\noindent
\textbf{2.2. End of the procedure}

\medskip
\noindent
\qquad
Let the indexes $l_{k}^{(i)}\,\big{(}i \in \lbrace1,\ldots,n\rbrace$ and $k \in \lbrace0,\ldots,\eu{m}_{i}\rbrace\big{)}$, which appear in the polynomial $P(x)$, be aligned: $l_{0}, l_{1},\ldots,l_{m}$; where $m\!+\!1$
is the overall number of sub-addends which come from every addend $s_{i}(x)$. The indexes
$l_{0}, l_{1}, \ldots l_{m}$ have been determined in (\ref{Ineq_39}), (\ref{Ineq_44}) and (\ref{Ineq_45}). Let us notice that according to the index
$l_{s}$ it holds:
\begin{equation}
\label{Ineq_52}
f(x) > P(x, l_{0}, l_{1}, \ldots, l_{s}+1, \ldots, l_{m}) > P(x, l_{0}, l_{1}, \ldots, l_{s},\ldots, l_{m})
\end{equation}
for every index $s \in \lbrace 0,1,2, \ldots, m \rbrace$  and $l_{s} \in \mathbb{N}_{0}$.
It should be noted that the interval in which the sharp inequality
\begin{equation}
\label{Ineq_53}
P(x, l_{0}, l_{1}, \ldots, l_{s}+1, \ldots, l_{m}) > P(x, l_{0}, l_{1}, \ldots, l_{s},\ldots, l_{m})
\end{equation}
is valid, may be determined according to the Lemmas 1.1. and 1.2.
By increasing every index $l_{s}$, the intervals of validity of (\ref{Ineq_52}) are expanded based on the Lemmas 1.1 and 1.2.
and we get even better and better downward approximations of the function $f(x)$.
The previously described method defines a procedure which ends when at least one $(m\!+\!1)$-tuple of the indexes
$(l_{0}, l_{1}, \ldots, l_{m}) = (\hat{l}_{0}, \hat{l}_{1}, \ldots, \hat{l}_{m})$ has been determined for which it is valid:
\begin{equation}
\label{Ineq_54}
P(x, \hat{l}_{0}, \hat{l}_{1},\ldots, \hat{l}_{m})>0
\end{equation}
for $x \!\in\! (0,\delta)$. By completing the procedure, we get a proof of the initial inequality (\ref{Ineq_1}).
\begin{remark}
This method represents a generalisation of the method that C.~Mortici used for proving inequalities in the article
{\rm \cite{Mortici_2011}}. The method comes down to proving polynomial inequalities of the form $P(x)\!>\!0$ for
$x \!\in\! (0,\delta)$ which is a decidable problem according to the results by Tarski {\rm \cite{Poonen_14}}.
\end{remark}

\noindent
By using this method it is our aim in this article to get some well-known results concerning
the inequalities of the form (\ref{Ineq_1}) that have been considered in the lately published articles.

\section{The applications of the method} 

\smallskip
\noindent
\qquad
In this section we consider the applications of the method based on the Theorem 2.1. in some concrete inequalities.

\medskip
\noindent
\textbf{3.1. A proof of an inequality from the article [6]} 

\noindent
In the article \cite{Chen_Cheung_2012} C.$-$P. Chen and W.$-$S. Cheung have lately proved the following statement (Theorem 2 in \cite{Chen_Cheung_2012}):
\begin{theorem}
{\rm \textbf{(i)}}
For $0<x<\pi/2$, we have
\begin{equation}
\label{Ineq_55}
\Big{(}\frac{x}{\sin x}\Big{)}^{\!2}\!+\frac{x}{\tan x}<2+\frac{2}{45}x^{3}\tan x.
\end{equation}

\noindent
The constant $\frac{2}{45}$ is the best possible.

\smallskip
\noindent
{\rm \textbf{(ii)}}
For $0<x<\pi/2$, we have
\begin{equation}
\label{Ineq_56}
\Big{(}\frac{x}{\sin x}\Big{)}^{\!2}\!+\frac{x}{\tan x}<2+\frac{2}{45}x^{4}+\frac{8}{945}x^{5}\tan x.
\end{equation}

\noindent
The constant $\frac{8}{945}$ is the best possible.
\end{theorem}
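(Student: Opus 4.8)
The plan is to reduce each inequality to the canonical form (\ref{Ineq_1}) and then invoke the method codified in Theorem 2.1. Since $\sin^2\!x\,\cos x>0$ on $(0,\pi/2)$, I would multiply (\ref{Ineq_55}) by $\sin^2\!x\,\cos x$ and collect all terms on one side; (\ref{Ineq_55}) is then equivalent to $f(x)>0$ on $(0,\pi/2)$, where
$$
f(x)=2\sin^2\!x\,\cos x+\frac{2}{45}x^{3}\sin^{3}\!x-x^{2}\cos x-x\cos^{2}\!x\,\sin x .
$$
This $f$ is of the form (\ref{Ineq_1}) with the four data triples $(\alpha_i,p_i,q_i,r_i)$ equal to $(2,0,1,2)$, $\bigl(\frac{2}{45},3,0,3\bigr)$, $(-1,2,1,0)$ and $(-1,1,2,1)$. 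Multiplying (\ref{Ineq_56}) by the same factor reduces part \textbf{(ii)} to $g(x)>0$, where
$$
g(x)=2\sin^2\!x\,\cos x+\frac{2}{45}x^{4}\sin^{2}\!x\,\cos x+\frac{8}{945}x^{5}\sin^{3}\!x-x^{2}\cos x-x\cos^{2}\!x\,\sin x ,
$$
again a mixed trigonometric polynomial of type (\ref{Ineq_1}). It therefore suffices to treat $f$ and $g$ by one and the same procedure.

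Next I would linearise each factor $\cos^{q_i}\!x\,\sin^{r_i}\!x$ by Theorem 1.5 (using Lemmas 1.3 and 1.4 for the pure powers $\sin^{3}\!x$ and $\cos x$), so that every addend becomes a combination of terms $\sin\bigl((q_i+r_i-2k)x\bigr)$ or $\cos\bigl((q_i+r_i-2k)x\bigr)$. For both $f$ and $g$ the largest multiple angle that occurs is $3x$, so all arguments remain in $[0,3\pi/2]$. Following the recipe (\ref{Ineq_35})--(\ref{Ineq_45}) of Section 2, I would then group these terms by frequency, form the coefficients $\beta_k$ (respectively $\gamma_k$), and replace each sine or cosine of a multiple angle by the downward or upward Maclaurin approximation prescribed by the sign of its coefficient, as licensed by Lemmas 1.1 and 1.2. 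For the orders we shall need, the validity intervals $[0,\sqrt{(n+3)(n+4)}]$ of these approximations comfortably contain $[0,3\pi/2]$, so the substitutions are legitimate on the whole range and yield a genuine downward polynomial approximation $P(x)$ with $f(x)>P(x)$ (respectively $g(x)>P(x)$) on $(0,\pi/2)$.

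It then remains to produce approximation indices $l^{(i)}_{k}$ for which $P(x)>0$ throughout $(0,\pi/2)$; by Theorem 2.1 this yields $f(x)>0$ (respectively $g(x)>0$), and verifying $P(x)>0$ is a polynomial inequality, hence a decidable problem in the sense of Remark 2.9. I expect the choice of the orders $l^{(i)}_{k}$ to be the main obstacle. The functions $f$ and $g$ are even, and since the constants $\frac{2}{45}$ and $\frac{8}{945}$ are best possible the low-order Maclaurin coefficients cancel identically: a short computation gives that the coefficients of $f$ through $x^{6}$ vanish, so $f$ has a zero of order eight at the origin (and $g$ an even higher-order zero). Consequently a crude, low-order approximation returns a $P$ whose leading behaviour near $0$ is not strictly positive, and the indices must be taken large enough that $P$ reproduces the correct positive leading coefficient at $0$ while staying positive all the way out to $x=\pi/2$. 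By the completeness result Theorem 2.7 such indices exist on every compact subinterval, and by (\ref{Ineq_52}) increasing any index only enlarges the interval on which the sharpened approximation dominates; the termination criterion of Section 2.2 is therefore met, and the resulting positive polynomial $P(x)$ establishes both inequalities on $(0,\pi/2)$.
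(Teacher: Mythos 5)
Your reduction is exactly the paper's: multiplying (\ref{Ineq_55}) by $\sin^{2}\!x\cos x$ gives precisely the mixed trigonometric polynomial $f(x)$ of (\ref{Ineq_57}), and your outline (multiple-angle linearisation via Theorem 1.5, Maclaurin bounds from Lemmas 1.1 and 1.2, positivity of the resulting downward polynomial) is the paper's method. But the proposal stops exactly where the proof has to start. The paper's argument is constructive: it writes $f$ in the multiple-angle form (\ref{Ineq_58}), chooses the concrete approximation orders $\underline{T}^{\cos,0}_{\,6}$, $\overline{T}^{\,\cos,0}_{12}$, $\overline{T}^{\,\sin,0}_{13}$, obtains the explicit polynomial $P_{16}(x)=\frac{x^{8}}{186810624000}\,P_{8}(x)$, and then certifies $P_{8}(x)>0$ on $(0,\pi/2)$ by substituting $z=x^{2}$ and isolating the roots of the resulting quartic, whose least positive root yields $x^{\ast}=\sqrt{z_{1}}=2.122\ldots>\pi/2$. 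None of this appears in your proposal; you explicitly defer ``the choice of the orders $l^{(i)}_{k}$'' and replace it by an existence argument.

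That existence argument is where the genuine gap lies, and it is twofold. First, it is circular: Theorem 2.7 has as hypothesis condition (\ref{Ineq_48}), i.e.\ that $f(x)>0$ already holds on $(0,x^{\myplus})$, which is what you are trying to prove. The only $x^{\myplus}$ you can obtain non-circularly is a small one coming from the local analysis at $0$ (via $f^{(8)}(0)>0$ and Lemma 2.5), and it need not reach $\pi/2$. Second, even granting the hypothesis, Theorem 2.7 only guarantees a positive downward approximation on compact subintervals $(a,b)$ with $0<a<b<x^{\myplus}$, never on the full interval; so the assertion that ``the termination criterion of Section 2.2 is therefore met'' on $(0,\pi/2)$ does not follow from anything you have established --- termination on the whole interval is exactly what the explicit computation with $P_{16}$ certifies, since its least positive root exceeds $\pi/2$. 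Finally, you address neither the optimality of the constants (the paper disposes of this by an elementary limit argument at $x\to 0$) nor do you actually carry out part \textbf{(ii)}: your proposal promises to treat $f$ and $g$ uniformly but completes neither computation, so as it stands it proves nothing beyond what the quoted general theorems already say.
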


\noindent
Now we are presenting a proof of an inequality (\ref{Ineq_55}).
\begin{proof}
The requested inequality is equivalent to the inequality $f(x)\!>\!0$ for $x \!\in \!(0,\pi/2)$,
where
\begin{equation}
\label{Ineq_57}
\begin{array}{rcl}
f(x)=2\cos x\sin^{2}\!x+\frac{2}{45}x^{3}\sin^{3}\!x-x \cos^{2}\!x\sin x-x^{2}\cos x
\end{array}
\end{equation}

\smallskip
\noindent
is one concrete mixed trigonometric polynomial.
Let us notice that $x=0$ is zero of the eighth order of the function $f(x)$.
According to the Theorem 1.5. the function $f(x)$ may be written in the following way:
\begin{equation}
\label{Ineq_58}
f(x)=\mbox{\small $\frac{1}{2}$}\cos x-x^{2}\cos x
-\mbox{$\small\frac{1}{2}$}\cos3x-(\underbrace{\mbox{$\small\frac{1}{90}$}x^{3}
+\mbox{$\small\frac{1}{4}x$}}_{(>0)})\sin3x+(\underbrace{\mbox{$\small\frac{1}{30}$}x^{3}
-\mbox{$\small\frac{1}{4}x$}}_{(<0)})\sin x.
\end{equation}

\vspace*{-1.0 mm}

\noindent
Then, according to the Lemmas 1.1. and 1.2. and the description of the method, the following inequalities:
$\cos y > \underline{T}_{\,k}^{\cos,0}(y) \,\, (k=6)$, $\cos y < \overline{T}_{k}^{\,\cos,0}(y) \,\, (k=12)$ and
$\sin y < \overline{T}_{k}^{\,\sin,0}(y) \,\, (k=13)$ are true, for
$y \in \big{(}0,\sqrt{(k+3)(k+4)}\,\big{)}$.

\smallskip
\noindent
For $x \in (0,\pi/2)$ it is valid:
\begin{equation}
\hspace*{-0.4cm}
\label{Ineq_59}
\begin{array}{rcl}
f(x)
\!&\!\!>\!\!&\!
\mbox{\small $\frac{1}{2}$}\underline{T}_{\,6}^{\cos,0}(x)-x^{2}\overline{T}_{12}^{\,\cos,0}(x)
-\mbox{$\small\frac{1}{2}$}\overline{T}_{12}^{\,\cos,0}(3x)-(\underbrace{\mbox{$\small\frac{1}{90}$}x^{3}
+\mbox{$\small\frac{1}{4}x$}}_{(>0)})\overline{T}_{13}^{\,\sin,0}(3x)
                                                                                              \\[1.5 ex]
\!&\!\!+\!\!&\!
(\underbrace{\mbox{$\small\frac{1}{30}$}x^{3}-
\mbox{$\small\frac{1}{4}x$}}_{(<0)})\overline{T}_{13}^{\,\sin,0}(x)
=
P_{16}(x),
\end{array}
\end{equation}
where $P_{16}(x)$ is the polynomial
\begin{equation}
\label{Ineq_60}
\begin{array}{rcl}
P_{16}(x)
\!&\!\!=\!\!&\!
\mbox{\footnotesize $\displaystyle\frac{x^{8}}{186810624000}$}
{\big (}
\mbox{\small $-531440x^{8}\!-\!2746332x^{6}\!-\!8885955x^{4}\!$}
                                                                                  \\[2.0 ex]
\!&\!\!-\!\!&\!\!\mbox{\small $118584180x^{2}\!+\!1183782600$}{\big )}
                                                                                  \\[1.0 ex]
\!&\!\!=\!\!&\!
\mbox{\footnotesize $\displaystyle\frac{x^{8}}{186810624000}$}
P_{8}(x).
\end{array}
\end{equation}
Then we determine the sign of the polynomial $P_{8}(x)$ for $x \in (0,\pi/2)$.
By introducing the substitute $z=x^{2}$, we get the fourth degree polynomial:
\begin{equation}
\label{Ineq_61}
P_{4}(z)=\mbox{\small $-531440\,z^{4}
\!-\!2746332\,z^{3}
\!-\!8885955\,z^{2}
\!-\!118584180\,z
\!+\!1183782600$}.
\end{equation}

\noindent
A real numerical factorization of the polynomial $P_{4}(z)$, has been determined via {\sf Matlab}
software, and given with
\begin{equation}
\label{Ineq_62}
P_{4}(z)
\!=\!
\alpha(z\!-\!z_{1})(z-z_{2})(z^2\!+\!pz\!+\!q),
\end{equation}
where
$
\alpha\!=\!-531440,
z_{1}\!=\!4.503628...,
z_{2}\!=\!-9.049...,
p
\!=\!0.621...,
q
\!=\!54.652...;
$
whereby the inequality \mbox{$p^{\,2}\!-4q\!<\!0$} is true. The polynomial $P_{4}(z)$ has exactly
two simple real roots with a symbolic radical representation and the corresponding numerical values $z_{1}$ and $z_{2}$.
Since $P_{4}(0)>0$ it follows that $P_{4}(z)>0$ for the values $z \in (0,z_{1})\subset(z_{2},z_{1})$ .
Finally, we conclude that
\begin{equation}
\hspace*{-4.25 mm}
\begin{array}{rcl}
\label{Ineq_63}
P_{8}(x) \!>\! 0 \;\mbox{\rm for}\; x \!\in\! (0,\!\sqrt{\!z_{1}\!})\!=\!(0,\mbox{\small $\!2.122...$})
& \!\!\!\!\!\Longrightarrow\!\!\!\!\! &
P_{16}(x)\! >\! 0 \;\mbox{\rm for}\; x \!\in\! (0,\mbox{\small $\!2.122...$})
                                                                                        \\[1.0 ex]
& \!\!\!\!\!\Longrightarrow\!\!\!\!\! &
f(x)\!>\! 0 \;\mbox{\rm for}\; x \!\in\! (0,\!\mbox{\small $\displaystyle\frac{\pi}{2}$})\!\subset
\!(0,\mbox{\small $\!2.122...$}).\!\!
\end{array}
\end{equation}

\smallskip
\noindent
Let us notice that the least positive real root of the downward approximation of the function $f(x)$,
i.e. of the polynomial $P_{16}(x)$, is \mbox{$x^{\ast}\!=\!\sqrt{z_{1}}\!=\!2.122175...\!>\!\pi/2$}.
Elementary calculus gives that the constant $\frac{2}{45}$ is the best possible.
\end{proof}

\medskip
\noindent
\textbf{3.2. A proof of an inequality from the paper [35]} 

\smallskip
\noindent
In the paper \cite{Sun_Zhu_2011} Z.$-$J. Sun and L. Zhu have posed an open problem,
to prove the following statement:
\begin{theorem}
Let $0<x<\pi/2$. Then
\begin{equation}
\begin{array}{rcl}
\label{Ineq_64}
\hspace{-0.3 cm}
\mbox{\small $\displaystyle\frac{(2\pi^{4}/3)x^{3}+(8\pi^{4}/15-16\pi^{2}/3)x^{5}}{(\pi^{2}-4x^{2})^{2}}$}
& \!\!\!<\!\!\! &
\mbox{\small $x\sec^{2} \!x-\tan x$}
                                                                                            \\[2.5 ex]
& \!\!\!<\!\!\! &
\mbox{\small $\displaystyle\frac{(2\pi^{4}/3)x^{3}+(256/\pi^{2}-8\pi^{2}/3)x^{5}}{(\pi^{2}-4x^{2})^{2}}$},
\end{array}
\end{equation} hold, where $(8\pi^{4}/15-16\pi^{2}/3)$ and $(256/\pi^{2}-8\pi^{2}/3)$
are~the~best~constants~in~$(\ref{Ineq_64})$.
\end{theorem}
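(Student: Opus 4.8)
The plan is to treat the two inequalities in (\ref{Ineq_64}) separately and reduce each to the canonical form $f(x)>0$ on $(0,\pi/2)$ by clearing the strictly positive denominators. Since $\cos^2\!x>0$ and $(\pi^2-4x^2)^2>0$ for $x\in(0,\pi/2)$, and since $x\sec^2\!x-\tan x=\frac{x-\sin x\cos x}{\cos^2\!x}$, the left inequality is equivalent to $f_1(x)>0$ and the right one to $f_2(x)>0$, where
\[
f_1(x)=(x-\sin x\cos x)(\pi^2-4x^2)^2-\cos^2\!x\,N_-(x),\qquad
f_2(x)=\cos^2\!x\,N_+(x)-(x-\sin x\cos x)(\pi^2-4x^2)^2,
\]
with $N_-(x)=\tfrac{2\pi^4}{3}x^3+(\tfrac{8\pi^4}{15}-\tfrac{16\pi^2}{3})x^5$ and $N_+(x)=\tfrac{2\pi^4}{3}x^3+(\tfrac{256}{\pi^2}-\tfrac{8\pi^2}{3})x^5$. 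Using $\sin x\cos x=\tfrac12\sin 2x$ and $\cos^2\!x=\tfrac12(1+\cos 2x)$, each $f_j$ becomes a mixed trigonometric polynomial of the form (\ref{Ineq_1}) whose only trigonometric factors are $\sin 2x$ and $\cos 2x$, with coefficients that are polynomials in $x$ and $\pi$.

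Next I would apply the machinery of Section~2. A short Maclaurin computation shows that $x=0$ is a zero of $f_1$ and $f_2$ of high order, because the constants in (\ref{Ineq_64}) are tuned so that the bounds agree with $x\sec^2\!x-\tan x=\tfrac23x^3+\tfrac{8}{15}x^5+\ldots$ through order $x^5$; hence each $f_j$ admits a genuine downward polynomial approximation in the sense of the method. Following (\ref{Ineq_35})--(\ref{Ineq_44}) and Theorem~1.5, I would replace $\sin 2x$ and $\cos 2x$ by the appropriate $\underline{T}$ or $\overline{T}$ Maclaurin polynomials of Lemmas~1.1 and~1.2, choosing the order large enough that the approximations remain valid on the full relevant range: since the argument is $2x\in(0,\pi)$, it suffices to take $n$ with $\sqrt{(n+3)(n+4)}>\pi$. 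This produces polynomials $P^{(1)}(x)$ and $P^{(2)}(x)$ with $f_j(x)>P^{(j)}(x)$ on $(0,\pi/2)$.

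It then remains to verify $P^{(j)}(x)>0$ on $(0,\pi/2)$, a decidable polynomial problem exactly as in Section~3.1 and by Tarski's theorem \cite{Poonen_14}. The one genuine complication relative to the example of Section~3.1 is that the coefficients of $P^{(j)}$ are no longer rational but are polynomials in $\pi$ (and, for the upper estimate, in $1/\pi$); to certify positivity rigorously one substitutes sufficiently tight rational enclosures for $\pi$, or first normalises by $t=2x/\pi$, before isolating roots as in (\ref{Ineq_61})--(\ref{Ineq_63}).

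The main obstacle is the behaviour near the right endpoint. Both $\cos^2\!x$ and $(\pi^2-4x^2)^2$ vanish at $x=\pi/2$, so $f_1(\pi/2)=f_2(\pi/2)=0$; the downward approximation must therefore stay positive all the way up to $\pi/2$, which forces a high approximation order and makes the final polynomial positivity the crux of the argument. Finally, the two best-constant claims are settled by elementary calculus at the two \emph{different} endpoints: the lower constant $\tfrac{8\pi^4}{15}-\tfrac{16\pi^2}{3}$ is pinned at $x=0$, being exactly the value that makes the lower bound agree with the target through order $x^5$, whereas the upper constant $\tfrac{256}{\pi^2}-\tfrac{8\pi^2}{3}$ is pinned at $x\to(\pi/2)^-$, being exactly the value that matches the leading singular term $\tfrac{\pi}{2}(\pi/2-x)^{-2}$ of $x\sec^2\!x-\tan x$. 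In either case any strict improvement of the constant would violate the corresponding inequality in an arbitrarily small neighbourhood of that endpoint, which proves optimality.
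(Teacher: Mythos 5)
Your reduction to $f_1(x)>0$, $f_2(x)>0$ and your identification of where the two best constants are pinned (the lower one by matching the $x^5$-coefficient at $x=0$, the upper one by matching the singular term $\tfrac{\pi}{2}(\pi/2-x)^{-2}$ at $x=\pi/2$) agree with the paper. But the core of your plan --- produce, by taking the Maclaurin orders ``large enough'', polynomials $P^{(j)}$ with $f_j(x)>P^{(j)}(x)>0$ on all of $(0,\pi/2)$ --- cannot be carried out, and the obstruction is exactly the endpoint issue that you flag and then wave away as ``forcing a high approximation order''. Every downward approximation built by the method replaces $\sin 2x$ and $\cos 2x$ by Maclaurin bounds that are \emph{strict} for positive arguments (Lemmas 1.1 and 1.2), so $P^{(j)}(x)<f_j(x)$ throughout the validity range; in particular $P^{(j)}(\pi/2)<f_j(\pi/2)=0$, since both $\cos^2x$ and $(\pi^2-4x^2)^2$ vanish at $\pi/2$. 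Hence every such polynomial, of whatever order, is negative at $\pi/2$ and has a root strictly inside $(0,\pi/2)$, and Theorem 2.1 can only yield $f_j>0$ on $(0,x^{\ast})$ with $x^{\ast}<\pi/2$. Raising the order pushes $x^{\ast}$ toward $\pi/2$ (Theorem 2.3 guarantees positivity on compact subintervals $(a,b)$ with $b<\pi/2$) but never reaches the endpoint; no single polynomial closes the proof.

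The paper resolves this by splitting the interval and reflecting. For the left inequality it proves $f>0$ on $(0,1.136)$ with a downward approximation $P_{13}$, whose least positive root is $x^{\ast}=1.136099\ldots<\pi/2$ --- a concrete instance of the obstruction above --- and then, on $[1.136,\pi/2)$, it sets $\varphi(x)=f(\pi/2-x)$ and proves $\varphi>0$ on $(0,\,\pi/2-1.136\,]$ by a second downward approximation $Q_{11}$; the reflection moves the boundary zero at $\pi/2$ to the origin, where the method tolerates it because the approximating polynomial vanishes there to the same order. The right inequality is handled identically with split point $0.858$. This reflection/splitting step is the essential idea of the proof beyond the generic machinery of Section 2, and it is absent from your outline. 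With it added --- together with the derivative/monotonicity arguments the paper uses to certify positivity of the reflected polynomials $Q_{9}$ and $Q_{10}$, whose coefficients are polynomials in $\pi$, a complication you did anticipate --- your proposal becomes the paper's proof.
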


\noindent
Now we are presenting a proof of the previous statement.

\begin{proof}

\smallskip
\noindent
\textbf{I}
We prove the inequality:
\begin{equation}
\label{Ineq_65}
\frac{(2\pi^{4}/3)x^{3}+(8\pi^{4}/15-16\pi^{2}/3)x^{5}}{(\pi^{2}-4x^{2})^{2}}
< x\sec^{2} \!x-\tan x
\end{equation}
for $x \in (0,\pi/2)$. The requested inequality is equivalent to the inequality
$f(x)\!>\!0$ for $x \!\in (0,\pi/2)$, where
\begin{equation}
\label{Ineq_66}
\begin{array}{rcl}
f(x)=\mbox{\small $x(\pi^{2}\!-\!4x^{2})^{2}$}\!\!-\!\!\mbox{\small $(\pi^{2}\!-\!4x^{2})^{2}$}
\cos x\sin x\!-\!\big{(}
& \!\!\!\!\!\!(\!\!\!\!\!\! &\mbox{\small $2\pi^{4}/3)x^{3}$}+
                                                                                              \\[1.5 ex]
& \!\!\!\!\!\!(\!\!\!\!\!\! &\mbox{\small $8\pi^{4}\!/15\!-\!16\pi^{2}\!/3$})\mbox{\small $x^{5}$}\big{)}\!\cos^{2}x
\end{array}
\end{equation}

\smallskip
\noindent
is one concrete mixed trigonometric polynomial.
Let us notice that $x=0$ is zero of the seventh order and $x=\pi/2$ is zero of the second order
of the function $f(x)$. Let us consider two cases:

\smallskip
\noindent
\textbf{1)}
If $x \in (0,1.136)$:

\smallskip
\noindent
According to the Theorem 1.5. the function $f(x)$ may be written in the following way:
\begin{equation}
\label{Ineq_67}
\begin{array}{rcl}
f(x)
& \!\!\!\!\!=\!\!\!\!\! &
\mbox{\small $x(\pi^{2}\!-\!4x^{2})^{2}$}\!\!-\!\!
\mbox{\small $\displaystyle\frac{(\pi^{2}\!-\!4x^{2})^{2}}{2}$}\sin 2x\!-\!\big{(}
\mbox{\small $(\pi^{4}/3)x^{3}+(4\pi^{4}\!/15\!-\!8\pi^{2}\!/3)x^{5}$}\big{)}
                                                                                               \\[1.5 ex]
& \!\!\!\!\!-\!\!\!\!\! &\big{(}
\underbrace{\mbox{\small $(\pi^{4}/3)x^{3}+(4\pi^{4}\!/15\!-\!8\pi^{2}\!/3)x^{5}$}}_{(>0)}\big{)}\cos 2x.
\end{array}
\end{equation}
Then, according to the Lemmas 1.1. and 1.2. and the description of the method, the following inequalities:
$\sin y \!<\! \overline{T}_{k}^{\,\sin,0}(y) \,\, (k=9)$ and $\cos y \!<\! \overline{T}_{k}^{\,\cos,0}(y) \,\, (k=8)$
are true, for $y \in \big{(}0,\sqrt{(k+3)(k+4)}\,\big{)}$.

\smallskip
\noindent
For $x \in (0,1.136)$ it is valid:
\begin{equation}
\hspace*{-0.4cm}
\label{Ineq_68}
\begin{array}{rcl}
f(x)
& \!\!\!\!\!>\!\!\!\!\! &
\mbox{\small $x(\pi^{2}\!-\!4x^{2})^{2}$}\!\!-\!\!
\mbox{\small $\displaystyle\frac{(\pi^{2}\!-\!4x^{2})^{2}}{2}$}\overline{T}_{9}^{\,\sin,0}(2x)\!-\!\big{(}
\mbox{\small $(\pi^{4}/3)x^{3}+(4\pi^{4}\!/15\!-\!8\pi^{2}\!/3)x^{5}$}\big{)}
                                                                                               \\[1.5 ex]
& \!\!\!\!\!-\!\!\!\!\! &\big{(}
\underbrace{\mbox{\small $(\pi^{4}/3)x^{3}+(4\pi^{4}\!/15\!-\!8\pi^{2}\!/3)x^{5}$}}_{(>0)}\big{)}
\overline{T}_{8}^{\,\cos,0}(2x)
=P_{13}(x),
\end{array}
\end{equation}
where $P_{13}(x)$ is the polynomial
\begin{equation}
\label{Ineq_69}
\begin{array}{rcl}
P_{13}(x)
\!&\!\!=\!\!&\!
\mbox{\footnotesize $\displaystyle\frac{2x^{7}}{14175}$}
{\big (}
\mbox{\small $(-12\pi^{4}+120\pi^{2}-80)x^{6}\!-\!(-153\pi^{4}+1640\pi^{2}-1440)x^{4}$}
                                                                                  \\[2.0 ex]
\!&\!\!-\!\!&\!\!\mbox{\small $(1055\pi^{4}-11880\pi^{2}+15120)x^{2}\!+\!2295\pi^{4}
\!-\!30240\pi^{2}\!+\!75600$}{\big )}
                                                                                  \\[1.0 ex]
\!&\!\!=\!\!&\!
\mbox{\footnotesize $\displaystyle\frac{2x^{7}}{14175}$}
P_{6}(x).
\end{array}
\end{equation}
Then we determine the sign of the polynomial $P_{6}(x)$ for $x \in (0,1.136)$.
By introducing the substitute $z=x^{2}$, we get the third degree polynomial:
\begin{equation}
\label{Ineq_70}
\begin{array}{rcl}
P_{3}(z)
\!&\!\!=\!\!&\!
\mbox{\small $(-12\pi^{4}+120\pi^{2}-80)z^{3}\!-\!(-153\pi^{4}+1640\pi^{2}-1440)z^{2}$}
                                                                                  \\[2.0 ex]
\!&\!\!-\!\!&\!\!\mbox{\small $(1055\pi^{4}-11880\pi^{2}+15120)z\!+\!2295\pi^{4}
\!-\!30240\pi^{2}\!+\!75600$}.
\end{array}
\end{equation}

\noindent
A real numerical factorization of the polynomial $P_{3}(z)$, has been determined via {\sf Matlab}
software, and given with
\begin{equation}
\label{Ineq_71}
P_{3}(z)
\!=\!
\alpha(z\!-\!z_{1})(z^2\!+\!pz\!+\!q),
\end{equation}
where
$
\alpha\!=\!-64.556...,
z_{1}\!=\!1.290721...,
p
\!=\!-1.148...,
q
\!=\!8.365...;
$
whereby the inequality \mbox{$p^{\,2}\!-4q\!<\!0$} is true. The polynomial $P_{3}(z)$ has exactly
one simple real root with a symbolic radical representation and the corresponding numerical value
$z_{1}$. Let us notice that $\sqrt{z_1} = 1.136099... > 1.136$. Since $P_{3}(0)>0$ it follows
that $P_{3}(z)>0$ for $z \in (0, 1.136)$. Finally, we conclude that
\begin{equation}
\begin{array}{rcl}
\label{Ineq_72}
\hspace*{-0.4cm}
P_{6}(x) > 0 \,\,\mbox{\rm for}\,\, x \!\in\! (0,1.136)
& \!\!\!\!\!\Longrightarrow\!\!\!\!\! &
P_{13}(x) > 0 \,\,\mbox{\rm for}\,\, x \!\in\! (0,1.136)
                                                                                        \\[0.60 ex]
& \!\!\!\!\!\Longrightarrow\!\!\!\!\! &
f(x)> 0\,\,\mbox{\rm for}\,\, x \!\in\! (0,1.136).
\end{array}
\end{equation}
Let us notice that the least positive real root of the downward approximation of the function $f(x)$,
i.e. of the polynomial $P_{13}(x)$, is $x^{\ast} = \sqrt{z_1} = 1.136099...\;$.

\break

\noindent
\textbf{2)}
If $x \in [1.136,\pi/2)$, let us define the function
\begin{equation}
\begin{array}{rcl}
\label{Ineq_73}
\varphi(x)
\!&\!\!\!\!=\!\!\!\!&\!
f{\big(}\!\pi/2-x{\big)}\!=
\mbox{\small $-16x^{5}+40\pi x^{4}-32\pi^{2}x^{3}+8\pi^{3}x^{2}$}
                                                                                               \\[0.5 ex]
\!&\!\!\!\!-\!\!\!\!&\!
\mbox{\small $(16x^{4}-32\pi x^{3}+16\pi^{2}x^{2})$}\sin x\cos x
                                                                                                \\[0.5 ex]
\!&\!\!\!\!-\!\!\!\!&\!
\mbox{\small $(\pi^{2}/60)(\pi-2x)^{3}\big{(}(4\pi^{2}-40)x^{2}+(-4\pi^{3}+40\pi)x+\pi^{4}-5\pi^{2}\big{)}$}\sin^{2} x.
\end{array}
\end{equation}
Now we prove that $f(x)>0$ for $x \in [1.136,\pi/2)$, which is equivalent to $\varphi(x)>0$
for $x \in (\,0,c\,]$, where $c=\pi/2-1.136=\pi/2-142/125 \; (\,c=0.434\ldots\,)$. The function $\varphi(x)$
is also one concrete mixed trigonometric polynomial. According to the Theorem 1.5. the function
$\varphi(x)$ may be written in the following way:
\begin{equation}
\label{Ineq_74}
\hspace*{-2.0 mm}
\begin{array}{rcl}
\varphi(x)
& \!\!\!\!\!=\!\!\!\!\! &
\mbox{\small $-16x^{5}+40\pi x^{4}-32\pi^{2}x^{3}+8\pi^{3}x^{2}$}-
(\,\underbrace{\mbox{\small $8x^{4}-16\pi x^{3}+8\pi^{2}x^{2}$}}_{(>0)}\,)\sin2x
                                                                                                \\[0.5 ex]
\!&\!\!\!\!-\!\!\!\!&\!
\mbox{\small $(\pi^{2}/60)(\pi-2x)^{3}\big{(}(2\pi^{2}-20)x^{2}+(-2\pi^{3}+20\pi)x+\pi^{4}/2-5\pi^{2}/2\big{)}$}
                                                                                                \\[2.5 ex]
\!&\!\!\!\!+\!\!\!\!&\!
\mbox{\small $(\pi^{2}/60)(\pi-2x)^{3}$}\big{(}
\underbrace{\mbox{\small $(2\pi^{2}\!-\!20)x^{2}\!+\!(-\!2\pi^{3}\!+\!20\pi)x\!+\!\pi^{4}\!/2\!-\!5\pi^{2}\!/2$}}_{(>0)}\big{)}\!\cos 2x.
\end{array}
\end{equation}

\vspace*{-1.0 mm}

\noindent
Then, according to the Lemmas 1.1. and 1.2. and the description of the method, the following inequalities:
$\sin y \!<\! \overline{T}_{k}^{\,\sin,0}(y) \,\,(k=5)$ and $\cos y \!>\! \underline{T}_{k}^{\,\cos,0}(y) \,\,(k=6)$
are true, for $y \in \big{(}0,\sqrt{(k+3)(k+4)}\,\big{)}$.

\smallskip
\noindent
For $x \in (\,0,c\,]$ it is valid:
\begin{equation}
\hspace*{-0.4cm}
\label{Ineq_75}
\begin{array}{rcl}
\varphi(x)
& \!\!\!\!\!\!>\!\!\!\!\!\! &
\mbox{\small $-16x^{5}+40\pi x^{4}-32\pi^{2}x^{3}+8\pi^{3}x^{2}$}-
(\,\underbrace{\mbox{\small $8x^{4}-16\pi x^{3}+8\pi^{2}x^{2}$}}_{(>0)}\,)\overline{T}_{5}^{\,\sin,0}(2x)
                                                                                                \\[0.5 ex]
\!&\!\!\!\!\!-\!\!\!\!\!&\!
\mbox{\small $(\pi^{2}/60)(\pi-2x)^{3}\big{(}(2\pi^{2}-20)x^{2}+(-2\pi^{3}+20\pi)x+\pi^{4}/2-5\pi^{2}/2\big{)}$}
                                                                                               \\[2.5 ex]
\!&\!\!\!\!\!+\!\!\!\!\!&\!
\mbox{\small $(\pi^{2}\!/60)(\pi-2x)^{3}$}\!\big{(}
\underbrace{\mbox{\small $(2\pi^{2}\!-\!20)x^{2}\!+\!(-\!2\pi^{3}\!+\!20\pi)x\!+\!\pi^{4}\!/2\!-\!5\pi^{2}\!/2$}}_{(>0)}\big{)}\underline{T}_{6}^{\,\cos,0}(2x)
                                                                                               \\[0.0 ex]
\!&\!\!\!\!\!=\!\!\!\!\!&\!Q_{11}(x),
\end{array}
\end{equation}
where $Q_{11}(x)$ is the polynomial
\begin{equation}
\label{Ineq_76}
\begin{array}{rcl}
Q_{11}(x)
\!&\!\!=\!\!&\!
\mbox{\footnotesize $\displaystyle\frac{x^{2}}{2700}$}
{\big (}
\mbox{\small $(64\pi^{4}-640\pi^{2})x^{9}\!+\!(-160\pi^{5}+1600\pi^{3})x^{8}$}
                                                                                   \\[1.5 ex]
\!&\!\!+\!\!&\!\!\mbox{\small $(160\pi^{6}-2000\pi^{4}+4800\pi^{2}-5760)x^{7}$}
                                                                                   \\[0.5 ex]
\!&\!\!+\!\!&\!\!
\mbox{\small $(-80\pi^{7}+1880\pi^{5}-12000\pi^{3}+11520\pi)x^{6}$}
                                                                                   \\[0.5 ex]
\!&\!\!+\!\!&\!\!
\mbox{\small $(20\pi^{8}-1340\pi^{6}+12840\pi^{4}-20160\pi^{2}+28800)x^{5}$}
                                                                                   \\[0.5 ex]
\!&\!\!+\!\!&\!\!
\mbox{\small $(-2\pi^{9}+610\pi^{7}-8700\pi^{5}+36000\pi^{3}-57600\pi)x^{4}$}
                                                                                   \\[0.5 ex]
\!&\!\!+\!\!&\!\!
\mbox{\small $(-150\pi^{8}+4650\pi^{6}-34200\pi^{4}+28800\pi^{2}-86400)x^{3}$}
                                                                                   \\[0.5 ex]
\!&\!\!+\!\!&\!\!
\mbox{\small $(15\pi^{9}-1875\pi^{7}+15300\pi^{5}+194400\pi)x^{2}$}
                                                                                   \\[0.5 ex]
\!&\!\!+\!\!&\!\!
\mbox{\small $(450\pi^{8}-3150\pi^{6}-129600\pi^{2})x-45\pi^{9}+225\pi^{7}+21600\pi^{3}$}
{\big )}
                                                                                   \\[0.5 ex]
\!&\!\!=\!\!&\!\!
\mbox{\footnotesize $\displaystyle\frac{x^{2}}{2700}$}
Q_{9}(x).
\end{array}
\end{equation}
Then we determine the sign of the polynomial $Q_{9}(x)$ for $x \in (0,c]$.
Let us look at the fifth derivative of the polynomial $Q_{9}(x)$, as the fourth degree polynomial,
in the following form
\begin{equation}
\begin{array}{rcl}
\label{Ineq_77}
Q^{(5)}_{9}(x)
\!&\!\!\!\!=\!\!\!\!&\!
\mbox{\small $(967680\pi^{4}-9676800\pi^{2})x^{4}\!+\!(-1075200\pi^{5}+10752000\pi^{3})x^{3}$}
                                                                                           \\[2.0 ex]
\!&\!\!\!\!+\!\!\!\!&\!
\mbox{\small $(403200\pi^{6}-5040000\pi^{4}+12096000\pi^{2}-14515200)x^{2}$}
                                                                                           \\[2.0 ex]
\!&\!\!\!\!+\!\!\!\!&\!
\mbox{\small $(-57600\pi^{7}+1353600\pi^{5}-8640000\pi^{3}+8294400\pi)x$}
                                                                                           \\[2.0 ex]
\!&\!\!\!\!+\!\!\!\!&\!
\mbox{\small $2400\pi^{8}-160800\pi^{6}+1540800\pi^{4}-2419200\pi^{2}+3456000$}.
\end{array}
\end{equation}
A real numerical factorization of the polynomial $Q^{(5)}_{9}(x)$ has been determined via {\sf Matlab} software,
and given with
\begin{equation}
\label{Ineq_78}
Q^{(5)}_{9}(x)
\!=\!
\beta(x\!-\!x_{1})(x\!-\!x_{2})(x^2\!+\!px\!+\!q),
\end{equation}
where
$
\beta\!=\!-1245358.656...,
x_{1}\!=\!0.894...,
x_{2}\!=\!3.702...,
p
\!=\!1.106...,
q
\!=\!0.521...,
$
whereby the inequality \mbox{$p^{\,2}\!-4q\!<\!0$} is true.
The polynomial $Q^{(5)}_{9}(x)$ has exactly two simple real roots with a symbolic radical representation
and the corresponding numerical values $x_{1}$ and $x_{2}$. Therefore, the polynomial $Q^{(5)}_{9}(x)$ has no real roots for $x \in (\,0,c\,]$. Since $Q^{(5)}_{9}(0)<0$ it follows that $Q^{(5)}_{9}(x)<0$ for
$x \in (\,0,c\,]$. Furthermore, the polynomial $Q^{(4)}_{9}(x)$ is a monotonically decreasing function for $x \in (\,0,c\,]$ and $Q^{(4)}_{9}(c)>0$, so it follows that $Q^{(4)}_{9}(x)>0$ for $x \in (\,0,c\,]$.
Then, since the polynomial $Q^{'''}_{9}(x)$ is a monotonically increasing function for $x \in (\,0,c\,]$ and $Q^{'''}_{9}(c)<0$ it follows that $Q^{'''}_{9}(x)<0$ for $x \in (\,0,c\,]$. This implies that
the polynomial $Q^{''}_{9}(x)$ is a monotonically decreasing function for $x \in (\,0,c\,]$ and since $Q^{''}_{9}(c)>0$ it follows that $Q^{''}_{9}(x)>0$ for $x \in (\,0,c\,]$. Hence, the polynomial
$Q^{'}_{9}(x)$ is a monotonically increasing function for $x \in (\,0,c\,]$ and $Q^{'}_{9}(c)<0$, so it follows that $Q^{'}_{9}(x)<0$ for $x \in (\,0,c\,]$. Finally, since the polynomial $Q_{9}(x)$
is a monotonically decreasing function for $x \in (\,0,c\,]$ and $Q_{9}(c)>0$, we conclude that
\begin{equation}
\begin{array}{rcl}
\label{Ineq_79}
Q_{9}(x) \!>\! 0 \,\,\mbox{\rm for}\,\, x \!\in\! (\,0,c\,]
& \!\Longrightarrow\! &
Q_{11}(x) > 0 \,\,\mbox{\rm for}\,\, x \!\in\! (\,0,c\,]
                                                                                         \\[1.0 ex]
& \!\Longrightarrow\! &
\varphi(x)\!>\! 0\,\,\mbox{\rm for}\,\, x \!\in\! (\,0,c\,]
                                                                                         \\[1.0 ex]
& \!\Longrightarrow\! &
f(x)\!>\! 0\,\,\mbox{\rm for}\,\, x \!\in\! [1.136,\pi/2).
\end{array}
\end{equation}

\smallskip
\noindent
Let us notice that the least positive real root of the downward approximation of the function
$\varphi(x)$, i.e. of the polynomial $Q_{11}(x)$, is $x^{\ast}\!\!=\!0.630862\ldots \!>\!
c=0.434\ldots\;$. Elementary calculus gives that the constant $(8\pi^{4}/15-16\pi^{2}/3)$
is the best possible. The proof of the first inequality is completed.

\medskip
\noindent
\textbf{II}
We prove the inequality:
\begin{equation}
\label{Ineq_80}
x\sec^{2} \!x-\tan x<
\displaystyle\frac{(2\pi^{4}/3)x^{3}+(256/\pi^{2}-8\pi^{2}/3)x^{5}}{(\pi^{2}-4x^{2})^{2}}
\end{equation}
for $x \!\in\! (0,\pi/2)$. The requested inequality is equivalent to the inequality \mbox{$f(x)\!>\!0$} \hfill

\break

\noindent
for $x \!\in (0,\pi/2)$, where
\begin{equation}
\label{Ineq_81}
\begin{array}{rcl}
f(x)=\mbox{\small $-x(\pi^{2}\!-\!4x^{2})^{2}$}\!\!+\!\!\mbox{\small $(\pi^{2}\!-\!4x^{2})^{2}$}
\cos x\sin x\!+\!\big{(}
& \!\!\!\!\!\!(\!\!\!\!\!\! &\mbox{\small $2\pi^{4}/3)x^{3}$}+
                                                                                              \\[1.5 ex]
& \!\!\!\!\!\!(\!\!\!\!\!\! &\mbox{\small $256/\pi^{2}\!-\!8\pi^{2}\!/3$})\mbox{\small $x^{5}$}\big{)}\!\cos^{2}x
\end{array}
\end{equation}

\smallskip
\noindent
is one concrete mixed trigonometric polynomial.
Let us notice that $x=0$ is zero of the fifth order and $x=\pi/2$ is zero of the third order
of the function $f(x)$.

\smallskip
\noindent
Let us consider two cases:

\smallskip
\noindent
\textbf{1)}
If $x \in (0,0.858)$:

\smallskip
\noindent
According to the Theorem 1.5. the function $f(x)$ may be written in the following way:
\begin{equation}
\label{Ineq_82}
\begin{array}{rcl}
f(x)
& \!\!\!\!\!=\!\!\!\!\! &
\mbox{\small $-x(\pi^{2}\!-\!4x^{2})^{2}$}\!\!+\!\!
\mbox{\small $\displaystyle\frac{(\pi^{2}\!-\!4x^{2})^{2}}{2}$}\sin 2x\!+\!
\mbox{\small $(\pi^{4}/3)x^{3}+(128/\pi^{2}\!-\!4\pi^{2}\!/3)x^{5}$}
                                                                                               \\[1.5 ex]
& \!\!\!\!\!+\!\!\!\!\! &\big{(}
\underbrace{\mbox{\small $(\pi^{4}/3)x^{3}+(128/\pi^{2}\!-\!4\pi^{2}\!/3)x^{5}$}}_{(>0)}\big{)}\cos 2x.
\end{array}
\end{equation}

\smallskip
\noindent
Then, according to the Lemmas 1.1. and 1.2. and the description of the method, the inequalities:
$\sin y > \underline{T}_{k}^{\,\sin,0}(y) \,\,(k=7)$ and $\cos y > \underline{T}_{k}^{\,\cos,0}(y) \,\,(k=6)$
are true, for $y \in \big{(}0,\sqrt{(k+3)(k+4)}\,\big{)}$.

\smallskip
\noindent
For $x \in (0,0.858)$ it is valid:
\begin{equation}
\hspace*{-0.4cm}
\label{Ineq_83}
\begin{array}{rcl}
f(x)
& \!\!\!\!\!>\!\!\!\!\! &
\mbox{\small $-x(\pi^{2}\!-\!4x^{2})^{2}$}+
\mbox{\small $\displaystyle\frac{(\pi^{2}\!-\!4x^{2})^{2}}{2}$}\underline{T}_{7}^{\,\sin,0}(2x)\!+\!
\mbox{\small $(\pi^{4}/3)x^{3}\!+\!(128/\pi^{2}\!-\!4\pi^{2}\!/3)x^{5}$}
                                                                                               \\[1.5 ex]
& \!\!\!\!\!+\!\!\!\!\! &\big{(}
\underbrace{\mbox{\small $(\pi^{4}/3)x^{3}+(128/\pi^{2}\!-\!4\pi^{2}\!/3)x^{5}$}}_{(>0)}\big{)}
\underline{T}_{6}^{\,\cos,0}(2x)
=P_{11}(x),
\end{array}
\end{equation}
where $P_{11}(x)$ is the polynomial
\begin{equation}
\label{Ineq_84}
\hspace*{-2.0 mm}
\begin{array}{rcl}
P_{11}(x)
\!&\!\!\!=\!\!\!&\!
\mbox{\footnotesize $\displaystyle\frac{2x^{5}}{945\pi^{2}}$}
{\big (}
\mbox{\small $(56\pi^{4}\!-\!96\pi^{2}\!-\!5376)x^{6}\!+\!(-\!14\pi^{6}\!-\!372\pi^{4}\!+\!1008\pi^{2}\!+\!40320)x^{4}$}
                                                                                  \\[2.0 ex]
\!&\!\!\!+\!\!\!&\!\!\mbox{\small $(99\pi^{6}+756\pi^{4}-5040\pi^{2}-120960)x^{2}-252\pi^{6}+1260\pi^{4}+120960$}{\big )}
                                                                                  \\[1.0 ex]
\!&\!\!\!=\!\!\!&\!
\mbox{\footnotesize $\displaystyle\frac{2x^{5}}{945\pi^{2}}$}
P_{6}(x).
\end{array}
\end{equation}
Then we determine the sign of the polynomial $P_{6}(x)$ for $x \in (0,0.858)$.
By introducing the substitute $z=x^{2}$, we get the third degree polynomial:
\begin{equation}
\label{Ineq_85}
\begin{array}{rcl}
P_{3}(z)
\!&\!\!=\!\!&\!
\mbox{\small $(56\pi^{4}\!-\!96\pi^{2}\!-\!5376)z^{3}\!+\!(-\!14\pi^{6}\!-\!372\pi^{4}\!+\!1008\pi^{2}\!+\!40320)z^{2}$}
                                                                                  \\[2.0 ex]
\!&\!\!+\!\!&\!\!\mbox{\small $(99\pi^{6}+756\pi^{4}-5040\pi^{2}-120960)z-252\pi^{6}+1260\pi^{4}+120960$}.
\end{array}
\end{equation}

\noindent
A real numerical factorization of the polynomial $P_{3}(z)$, has been determined via {\sf Matlab}
software, and given with
\begin{equation}
\label{Ineq_86}
P_{3}(z)
\!=\!
\alpha(z\!-\!z_{1})(z^2\!+\!pz\!+\!q),
\end{equation}
where
$
\alpha\!=\!-868.572...,
z_{1}\!=\!0.737147...,
p
\!=\!0.077...,
q
\!=\!2.226...;
$
whereby the inequality \mbox{$p^{\,2}\!-4q\!<\!0$} is true. The polynomial $P_{3}(z)$ has exactly
one simple real root with a symbolic radical representation and the corresponding numerical value
$z_{1}$. Let us notice that $\sqrt{z_1} = 0.858573... > 0.858$. Since $P_{3}(0)>0$ it
follows that $P_{3}(z)>0$ for $z \in (0, 0.858)$. Finally, we conclude that
\begin{equation}
\begin{array}{rcl}
\label{Ineq_87}
\hspace*{-0.4cm}
P_{6}(x) > 0 \,\,\mbox{\rm for}\,\, x \!\in\! (0,\mbox{\small $0.858$})
& \!\!\!\!\!\Longrightarrow\!\!\!\!\! &
P_{11}(x) > 0 \,\,\mbox{\rm for}\,\, x \!\in\! (0,\mbox{\small $0.858$})
                                                                                         \\[1.0 ex]
& \!\!\!\!\!\Longrightarrow\!\!\!\!\! &
f(x)> 0\,\,\mbox{\rm for}\,\, x \!\in\! (0,\mbox{\small $0.858$}).
\end{array}
\end{equation}

\smallskip
\noindent
Let us notice that the least positive real root of the downward approximation of the function
$f(x)$, i.e. of the polynomial $P_{11}(x)$, is $x^{\ast}=\sqrt{z_{1}} = 0.858573...\;$.

\medskip
\noindent
\textbf{2)}
If $x \in [0.858,\pi/2)$, let us define the function
\begin{equation}
\begin{array}{rcl}
\label{Ineq_88}
\varphi(x)
\!&\!\!\!\!=\!\!\!\!&\!
f{\big(}\!\pi/2-x{\big)}\!=
\mbox{\small $16x^{5}-40\pi x^{4}+32\pi^{2}x^{3}-8\pi^{3}x^{2}$}
                                                                                               \\[1.5 ex]
\!&\!\!\!\!+\!\!\!\!&\!
\mbox{\small $(16x^{4}-32\pi x^{3}+16\pi^{2}x^{2})$}\sin x\cos x
                                                                                                \\[1.5 ex]
\!&\!\!\!\!+\!\!\!\!&\!
\mbox{\small $(1/(3\pi^{2}))(\pi-2x)^{3}\big{(}(96-\pi^{4})x^{2}+(\pi^{5}-96\pi)x+24\pi^{2}\big{)}$}\sin^{2} x.
\end{array}
\end{equation}
Now we prove that $f(x)>0$ for $x \in [0.858,\pi/2)$, which is equivalent to $\varphi(x)>0$
for $x \in (\,0, c\,]$, where $c = \pi/2-0.858 = \pi/2 - 429/500 \; (\,c = 0.712\ldots\,)$.
The function $\varphi(x)$ is also one concrete mixed trigonometric polynomial.
According to the Theorem 1.5. the function $\varphi(x)$ may be written in the following way:
\begin{equation}
\label{Ineq_89}
\begin{array}{rcl}
\varphi(x)
& \!\!\!\!\!=\!\!\!\!\! &
\mbox{\small $16x^{5}-40\pi x^{4}+32\pi^{2}x^{3}-8\pi^{3}x^{2}$}+
(\,\underbrace{\mbox{\small $8x^{4}-16\pi x^{3}+8\pi^{2}x^{2}$}}_{(>0)}\,)\sin2x
                                                                                                \\[2.5 ex]
\!&\!\!\!\!+\!\!\!\!&\!
\mbox{\small $(1/(6\pi^{2}))(\pi-2x)^{3}\big{(}(96-\pi^{4})x^{2}+(\pi^{5}-96\pi)x+24\pi^{2}\big{)}$}
                                                                                                \\[3.0 ex]
\!&\!\!\!\!-\!\!\!\!&\!
\mbox{\small $(1/(6\pi^{2}))(\pi-2x)^{3}$}\big{(}
\underbrace{\mbox{\small $(96\!-\!\pi^{4})x^{2}\!+\!(\pi^{5}\!-\!96\pi)x\!+\!24\pi^{2}$}}_{(>0)}\big{)}\!\cos 2x.
\end{array}
\end{equation}
Then, according to the Lemmas 1.1. and 1.2. and the description of the method, the inequalities:
$\sin y > \underline{T}_{k}^{\,\sin,0}(y) \,\,(k=7)$ and $\cos y < \overline{T}_{k}^{\,\cos,0}(y) \,\,(k=8)$
are true, for $y \in \big{(}0,\sqrt{(k+3)(k+4)}\,\big{)} $.

\smallskip
\noindent
For $x \in (\,0,c\,]$ it is valid:
\begin{equation}
\hspace*{-0.4cm}
\label{Ineq_90}
\begin{array}{rcl}
\varphi(x)
& \!\!\!\!\!\!>\!\!\!\!\!\! &
\mbox{\small $16x^{5}-40\pi x^{4}+32\pi^{2}x^{3}-8\pi^{3}x^{2}$}+
(\underbrace{\mbox{\small $8x^{4}-16\pi x^{3}+8\pi^{2}x^{2}$}}_{(>0)})\underline{T}_{7}^{\,\sin,0}(2x)
                                                                                                  \\[2.5 ex]
\!&\!\!\!\!\!+\!\!\!\!\!&\!
\mbox{\small $(1/(6\pi^{2}))(\pi-2x)^{3}\big{(}(96-\pi^{4})x^{2}+(\pi^{5}-96\pi)x+24\pi^{2}\big{)}$}
                                                                                                  \\[3.5 ex]
\!&\!\!\!\!\!-\!\!\!\!\!&\!
\mbox{\small $(1/(6\pi^{2}))(\pi-2x)^{3}$}\!\big{(}\!
\underbrace{\mbox{\small $(96\!-\!\pi^{4})x^{2}\!+\!(\pi^{5}\!-\!96\pi)x\!+\!24\pi^{2}$}}_{(>0)}\!\big{)}\overline{T}_{8}^{\,\cos,0}(2x)
                                                                                                  \\[3.5 ex]
\!&\!\!\!\!\!=\!\!\!\!\!&\!Q_{13}(x),
\end{array}
\end{equation}
where $Q_{13}(x)$ is the polynomial
\begin{equation}
\label{Ineq_91}
\begin{array}{rcl}
Q_{13}(x)
\!&\!\!=\!\!&\!
\mbox{\footnotesize $\displaystyle\frac{x^{3}}{945\pi^{2}}$}
{\big (}
\mbox{\small $(-8\pi^{4}+768)x^{10}\!+\!(20\pi^{5}-1920\pi)x^{9}$}
                                                                                  \\[2.0 ex]
\!&\!\!+\!\!&\!\!\mbox{\small $(-18\pi^{6}+112\pi^{4}+1728\pi^{2}-10752)x^{8}$}
                                                                                  \\[1.0 ex]
\!&\!\!+\!\!&\!\!
\mbox{\small $(7\pi^{7}-280\pi^{5}-576\pi^{3}+26880\pi)x^{7}$}

                                                                                  \\[1.0 ex]
\!&\!\!+\!\!&\!\!
\mbox{\small $(-\pi^{8}+252\pi^{6}-792\pi^{4}-24864\pi^{2}+80640)x^{6}$}
                                                                                  \\[1.0 ex]
\!&\!\!+\!\!&\!\!
\mbox{\small $(-98\pi^{7}+2076\pi^{5}+9408\pi^{3}-201600\pi)x^{5}$}
                                                                                  \\[1.0 ex]
\!&\!\!+\!\!&\!\!
\mbox{\small $(14\pi^{8}-1890\pi^{6}+1176\pi^{4}+191520\pi^{2}-241920)x^{4}$}
                                                                                  \\[1.0 ex]
\!&\!\!+\!\!&\!\!
\mbox{\small $(735\pi^{7}-5964\pi^{5}-80640\pi^{3}+604800\pi)x^{3}$}
                                                                                  \\[1.0 ex]
\!&\!\!+\!\!&\!\!
\mbox{\small $(-105\pi^{8}+5670\pi^{6}+15120\pi^{4}-574560\pi^{2})x^{2}$}
                                                                                  \\[1.0 ex]
\!&\!\!+\!\!&\!\!
\mbox{\small $(-2205\pi^{7}-2520\pi^{5}+234360\pi^{3})x+315\pi^{8}-30240\pi^{4}$}
{\big )}
                                                                                  \\[1.0 ex]
\!&\!\!=\!\!&\!\!
\mbox{\footnotesize $\displaystyle\frac{x^{3}}{945\pi^{2}}$}
Q_{10}(x).
\end{array}
\end{equation}
Then we determine the sign of the polynomial $Q_{10}(x)$ for $x \in (\,0,c\,]$.
Let us look at the sixth derivative of the polynomial $Q_{10}(x)$, as the fourth degree polynomial,
in the following form
\begin{equation}
\begin{array}{rcl}
\label{Ineq_92}
Q^{(6)}_{10}(x)
\!&\!\!\!\!=\!\!\!\!&\!
\mbox{\small $(-1209600\pi^{4}+116121600)x^{4}\!+\!(1209600\pi^{5}-116121600\pi)x^{3}$}
                                                                                           \\[2.0 ex]
\!&\!\!\!\!+\!\!\!\!&\!
\mbox{\small $(-362880\pi^{6}+2257920\pi^{4}+34836480\pi^{2}-216760320)x^{2}$}
                                                                                            \\[2.0 ex]
\!&\!\!\!\!+\!\!\!\!&\!
\mbox{\small $(35280\pi^{7}-1411200\pi^{5}-2903040\pi^{3}+135475200\pi)x$}
                                                                                           \\[2.0 ex]
\!&\!\!\!\!-\!\!\!\!&\!
\mbox{\small $720\pi^{8}+181440\pi^{6}-570240\pi^{4}-17902080\pi^{2}+58060800$}.
\end{array}
\end{equation}
A real numerical factorization of the polynomial $Q^{(6)}_{10}(x)$ has been determined via {\sf Matlab} software,
and given with
\begin{equation}
\label{Ineq_93}
Q^{(6)}_{10}(x)
\!=\!
\beta(x\!-\!x_{1})(x\!-\!x_{2})(x^2\!+\!px\!+\!q),
\end{equation}
where
$
\beta\!=\!-1704436.514...,
x_{1}\!=\!0.610...,
x_{2}\!=\!3.262...,
$
$
p
\!=\!0.731...,
$
$
q
\!=\!1.935...,
$
whereby the inequality \mbox{$p^{\,2}\!-4q\!<\!0$} is true.
The polynomial $Q^{(6)}_{10}(x)$ has exactly two simple real roots with a symbolic radical representation
and the corresponding numerical values $x_{1}$ and $x_{2}$. Since $Q^{(6)}_{10}(0)<0$, it follows that
$Q^{(6)}_{10}(x)<0$ for $x<x_{1}$ and since $Q^{(6)}_{10}(c)>0$, hence it follows that
$Q^{(6)}_{10}(x)>0$ for $x \in (x_{1},x_{2})$. Therefore, $Q^{(5)}_{10}(x)$ is a monotonically
decreasing function for $x<x_{1}$ and a monotonically increasing function for $x \in (x_{1},x_{2})$,
hence $Q^{(5)}_{10}(x)$ reaches the minimum at the point $x_{1}=0.610...$ in the interval
$(\,0,c\,]$. Then, since $Q^{(5)}_{10}(0)<0$ and $Q^{(5)}_{10}(c)<0$, it follows
that $Q^{(5)}_{10}(x)<0$ for $x \in (\,0,c\,]$. Furthermore, since the polynomial
$Q^{(4)}_{10}(x)$ is a monotonically decreasing function for $x \in (\,0,c\,]$ and
$Q^{(4)}_{10}(c)>0$ it follows that $Q^{(4)}_{10}(x)>0$ for $x \in (\,0,c\,]$.
This implies that the polynomial $Q^{'''}_{10}(x)$ is a monotonically increasing function for
$x \in (\,0,c\,]$ and $Q^{'''}_{10}(c)<0$, so it follows that $Q^{'''}_{10}(x)<0$
for $x \in (\,0,c\,]$. Hence, the polynomial $Q^{''}_{10}(x)$ is a monotonically decreasing
function for $x \in (\,0,c\,]$ and since $Q^{''}_{10}(c)>0$ it follows that
$Q^{''}_{10}(x)>0$ for $x \in (\,0,c\,]$. Then, since the polynomial $Q^{'}_{10}(x)$
is a monotonically increasing function for $x \in (\,0,c\,]$ and $Q^{'}_{10}(c)<0$
it follows that $Q^{'}_{10}(x)<0$ for $x \in (\,0,c\,]$. Finally, since the polynomial
$Q_{10}(x)$ is a monotonically decreasing function for $x \in (\,0,c\,]$ and
$Q_{10}(c)>0$, we conclude that
\begin{equation}
\begin{array}{rcl}
\label{Ineq_94}
Q_{10}(x) \!>\! 0 \,\,\mbox{\rm for}\,\, x \!\in\! (\,0,c\,]
& \!\Longrightarrow\! &
Q_{13}(x) > 0 \,\,\mbox{\rm for}\,\, x \!\in\! (\,0,c\,]
                                                                                        \\[0.60 ex]
& \!\Longrightarrow\! &
\varphi(x)\!>\! 0\,\,\mbox{\rm for}\,\, x \!\in\! (\,0,c\,]
                                                                                         \\[0.60 ex]
& \!\Longrightarrow\! &
f(x)\!>\! 0\,\,\mbox{\rm for}\,\, x \!\in\! [0.858,\pi/2).
\end{array}
\end{equation}

\smallskip
\noindent
Let us notice that the least positive real root of the downward approximation of the function
$\varphi(x)$, i.e. of the polynomial $Q_{13}(x)$, is $x^{\ast}=0.910490\ldots > c=0.712\ldots\;$.
Elementary calculus gives that the constant $(256/\pi^{2}-8\pi^{2}/3)$ is the best possible.
The proof of the second inequality is completed.
\end{proof}

\section{Conclusion} 

\noindent
\qquad$\!$
The previously described method may be applied to numerous \mbox{trigonometric} inequalities which correspond
to univariate mixed trigonometric polynomial functions. By using this method new results may be
obtained and the existing ones may be improved from the articles \mbox{\cite{Aharonov_Elias_2013}-\cite{Alirezaei_2013}}, \mbox{\cite{Bhayo_Sandor_2014}-\cite{Mortici_Debnath_Zhu_2015}},
\cite{Guo_Luo_Qi_2013b}-\cite{Jiang_Qi_2012}, \mbox{\cite{Klen_Visuri_Vuorinen_2010}},~\mbox{\cite{Mortici_2011}-\cite{Zhu_2006b}}
and the books \cite{Milovanovic_Rassias_2014}, \cite{Mitrinovic_1970}. Concrete results of the presented method
for proving some~inequalities, have been obtained in this article through the applications,
as well as in the articles \cite{Malesevic_Makragic_Banjac_2015}, \cite{Malesevic_Banjac_Jovovic_2015}
and \cite{Nenezic_Malesevic_Mortici_2015}.

\smallskip

{\small

 }

\break

\end{document}